\title{Enhanced Multi-Index Monte Carlo by means of Multiple Semi-Coarsened Multigrid for Anisotropic Diffusion Problems}
\author{Pieterjan Robbe\thanks{KU Leuven, Department of Computer Science, NUMA Section, Celestijnenlaan 200A box 2402, 3001 Leuven, Belgium (\href{mailto:pieterjan.robbe@kuleuven.be}{pieterjan.robbe@kuleuven.be}, \href{mailto:dirk.nuyens@kuleuven.be}{dirk.nuyens@kuleuven.be}, \href{mailto:stefan.vandewalle@kuleuven.be}{stefan.vandewalle@kuleuven.be}).}
\and Dirk Nuyens\footnotemark[1]
\and Stefan Vandewalle\footnotemark[1]}
\date{}
	\newtheorem{theorem}{Theorem}
	\pgfplotsset{compat=newest}
	\newlength\figureheight
	\newlength\figurewidth
\algnewcommand\algorithmicforeach{\textbf{for each}}
\newcommand{%
	\tikzexternalenable%
	\tikzsetnextfilename{}%
	\input{figures/.tex}%
	\tikzexternaldisable%
}[2]{%
	\tikzexternalenable%
	\tikzsetnextfilename{#2}%
	\input{figures/#1.tex}%
	\tikzexternaldisable%
}
\newcommand{\figref}[1]{Figure~\ref{#1}}
\newcommand{\slopetriangle}[5]
{
    \pgfplotsextra
    {
        \pgfkeysgetvalue{/pgfplots/xmin}{\xmin}
        \pgfkeysgetvalue{/pgfplots/xmax}{\xmax}
        \pgfkeysgetvalue{/pgfplots/ymin}{\ymin}
        \pgfkeysgetvalue{/pgfplots/ymax}{\ymax}

        \pgfmathsetmacro{\xArel}{#1}
        \pgfmathsetmacro{\yArel}{#3}
        \pgfmathsetmacro{\xBrel}{#1-#2}
        \pgfmathsetmacro{\yBrel}{\yArel}
        \pgfmathsetmacro{\xCrel}{\xBrel}

        \pgfmathsetmacro{\lnxB}{\xmin*(1-(#1-#2))+\xmax*(#1-#2)}
        \pgfmathsetmacro{\lnxA}{\xmin*(1-#1)+\xmax*#1}
        \pgfmathsetmacro{\lnyA}{\ymin*(1-#3)+\ymax*#3}
        \pgfmathsetmacro{\lnyC}{\lnyA+1.1*#4*(\lnxA-\lnxB)}
        \pgfmathsetmacro{\yCrel}{\lnyC-\ymin)/(\ymax-\ymin)}
        
        \coordinate (A) at (rel axis cs:\xArel,\yArel);
        \coordinate (B) at (rel axis cs:\xBrel,\yBrel);
        \coordinate (C) at (rel axis cs:\xCrel,\yCrel);

        \draw[#5]   (A)--node[pos=0.9,yshift=1ex,xshift=0.5ex] {\scriptsize #4}
                    (B)--
                    (C)-- 
                    cycle;
    }
}
\newcommand{\plotquantile}[4]{%
	\addplot[name path=upper,draw=none,forget plot] table[x index=0,y expr=log10(\thisrowno{#1})] {#3};%
	\addplot[name path=lower,draw=none,forget plot] table[x index=0,y expr=log10(\thisrowno{#2})] {#3};%
	\addplot[fill=#4,forget plot] fill between[of=upper and lower];%
}
\newcommand{\plotquantiles}[2]{%
	\plotquantile{3}{10}{#2}{#1!15}%
	\plotquantile{4}{9}{#2}{#1!20}%
	\plotquantile{5}{8}{#2}{#1!25}%
	\plotquantile{6}{7}{#2}{#1!30}%
}
\newcommand{\plotmedian}[4]{%
	\addplot[draw=#1,mark=#2,mark size=#3,mark options={fill=white}, rounded corners] table[x index=0,y expr=log10(\thisrowno{1})] {#4};%
}
\newcommand{\addmgresnorm}[3]{%
	\pgfplotstableread[header=false]{data/resnorms_1_2_1_4_#2_#3_MG_W_2__2___level_#1.txt}\resnormsmg
	\plotquantiles{blue26}{\resnormsmg}
	\plotmedian{blue26}{square*}{.8pt}{\resnormsmg}%
}
\newcommand{\addmsgresnorm}[3]{%
	\pgfplotstableread[header=false]{data/resnorms_1_2_1_4_#2_#3_MSG_W_2__2___level_#1.txt}\resnormsmsg
	\plotquantiles{red26}{\resnormsmsg}
	\plotmedian{red26}{*}{.9pt}{\resnormsmsg}%\label{MSG1}%
}
\newcommand{\plotconvergencefactor}[3]{%
	\addplot [#1, dashed, forget plot] plot [error bars/.cd, y dir = both, y explicit,error bar style={solid}] table[row sep=crcr, y error index=2]{#2};
	\addplot [#1, solid, forget plot] plot [error bars/.cd, y dir = both, y explicit,error bar style={solid}] table[row sep=crcr, y error index=2]{#3};
}
\newcommand{\plotconvergencefactors}[4]{%
	\pgfplotstableread[header=false]{data/rho_1_2_1_4_#1_#2_MG_W_#3__#3__.txt}\MG
	\pgfplotstableread[header=false]{data/rho_1_2_1_4_#1_#2_MSG_W_#3__#3__.txt}\MSG
	\plotconvergencefactor{#4}{\MG}{\MSG}\label{conv_fact_W(#3,#3)}
	%\addlegendimage{#4}\addlegendentry{W(#3,#3)};
}
\newcommand{\convergencefactorsfor}[2]{%
	\plotconvergencefactors{#1}{#2}{2}{line1}
	\plotconvergencefactors{#1}{#2}{3}{line2}
	\plotconvergencefactors{#1}{#2}{4}{line3}
	\plotconvergencefactors{#1}{#2}{5}{line4}
	\addlegendimage{default line, dashed}\label{conv_fact_MG}%\addlegendentry{MG};
	\addlegendimage{default line}\label{conv_fact_MSG}%\addlegendentry{MSG};
}
\DeclareMathSymbol{\shortminus}{\mathbin}{AMSa}{"39}
\newcommand{\drawsquare}[3]{\draw[black,fill=#3] (#1,#2)--(#1,#2+1)--(#1+1,#2+1)--(#1+1,#2)--cycle;}
\pgfplotsset{
	image axis/.style={
		width=\figurewidth,
		height=\figureheight,
		xmin = 0,
		xmax = 1,
		ymin = 0,
		ymax = 1,
		axis line style={draw=none},
		xmajorticks=false,
		ymajorticks=false
	},
	resnorm axis/.style={
		width=\figurewidth,
		height=\figureheight,
		xmin = 0,
		xmax = 50,
		xtick = {0,10,...,50},
		minor xtick={5,15,...,45},
		ymax = 1,
		ymin = -17,
		ytick = {-16,-12,...,0},
		minor ytick={-14,-10,...,-2},		
		ticklabel style={font=\scriptsize},
		major tick length=2pt,
		minor tick length=2pt,
		every tick/.style={line cap=round},
		axis on top,
		clip marker paths=true,
	},
	run time axis/.style={
		width=\figurewidth,
		height=\figureheight,
		xlabel={$\varepsilon$},
		ylabel={computation time},
		legend style={draw=none, font=\scriptsize, at={(1.03,1)}, anchor=north east, fill=none, legend cell align=left}, 
		ticklabel style={font=\small},
		major tick length={2pt},
		every tick/.style={black, line cap=round},
		axis on top,
		y label style={at={(axis description cs:-0.15,.5)},anchor=south},
		x label style={at={(axis description cs:.5,-.125)},anchor=north},
	},
	convergence factor axis/.style={
		width=\figurewidth,
		height=\figureheight,
		xmin = 0,
		xmax = 265,
		xtick = {8,16,32,64,128,256},
		minor xtick={16},
		ymax = 1,
		ymin = 0,
		ticklabel style={font=\scriptsize},
		major tick length=2pt,
		minor tick length=2pt,
		every tick/.style={black, line cap=round},
		axis on top,
	},
	fancy dense dots/.style={dash pattern=on 0pt off 3\pgflinewidth},
}
\pgfplotsset{%
	default line/.style={black, semithick, mark options={solid,fill=white}, line cap=round},
	mark1/.style={mark=square*, mark size=0.9pt},
	mark2/.style={mark=*, mark size=1pt},
	mark3/.style={mark=diamond*, mark size=1.4pt},
	mark4/.style={mark=triangle*, mark size=1.4pt},
	line1/.style={default line, color=blue26, mark1},
	line2/.style={default line, color=red26, mark2},
	line3/.style={default line, color=green26, mark3},
	line4/.style={default line, color=orange26, mark4}
}
\definecolor{blue26}{RGB}{0,117,220}
\definecolor{green26}{RGB}{43,206,72}
\definecolor{red26}{RGB}{255,0,16}
\definecolor{orange26}{RGB}{255,164,5}
\definecolor{turqoise26}{RGB}{0,153,143}
\newcommand{\bsb}{{\boldsymbol{b}}}
\newcommand{\bsr}{{\boldsymbol{r}}}
\newcommand{\bsu}{{\boldsymbol{u}}}
\newcommand{\bsv}{{\boldsymbol{v}}}
\newcommand{\bsx}{{\boldsymbol{x}}}
\newcommand{\bsL}{{\boldsymbol{L}}}
\newcommand{\bszero}{{\boldsymbol{0}}} % vector of zeros
\newcommand{\bstau}{{\boldsymbol{\tau}}}
\newcommand{\N}{{\mathbb{N}}} % natural numbers {1, 2, ...}
\DeclareSymbolFont{bbold}{U}{bbold}{m}{n}
\DeclareSymbolFontAlphabet{\mathbbold}{bbold}
\newcommand{\calP}{{\mathcal{P}}}
\newcommand{\calR}{{\mathcal{R}}}
\newcommand{\frakr}{{\mathfrak{r}}}
\newcommand{\setu}{{\mathfrak{u}}}
\newcommand{\bDelta}{\boldsymbol{\Delta}}
\newcommand{\bell}{\boldsymbol{\ell}}
\newcommand{\bkappa}{\boldsymbol{\kappa}}
\newcommand{\bL}{\boldsymbol{L}}
\newcommand{\bb}{\boldsymbol{b}}
\newcommand{\be}{\boldsymbol{e}}
\newcommand{\br}{\boldsymbol{r}}
\newcommand{\bu}{\boldsymbol{u}}
\newcommand{\bx}{\boldsymbol{x}}
\newcommand{\domain}{D}
\newcommand{\ndims}{d}
\newcommand{\samplespace}{\Omega}
\newcommand{\diffcoeffname}{a}
\newcommand{\grfname}{Z}
\newcommand{\samplename}{\omega}
\newcommand{\pointname}{\bx}
\newcommand{\solname}{u}
\newcommand{\covmodelname}{C}
\newcommand{\covdistancename}{r}
\newcommand{\lengthscale}{\lambda}
\newcommand{\corrlength}{\lengthscale}
\newcommand{\smoothness}{\nu}
\newcommand{\anisotropy}{\eta}
\newcommand{\rotation}{\theta}
\newcommand{\qoi}{Q}
\newcommand{\nbofsamples}{N}
\newcommand{\level}{\ell}
\newcommand{\maxlevel}{L}
\newcommand{\mldiff}{\Delta}
\newcommand{\idx}{{\bell}}
\newcommand{\maxindex}{\bL}
\newcommand{\midiff}{\bDelta}
\newcommand{\mlratealpha}{\alpha}
\newcommand{\mlratebeta}{\beta}
\newcommand{\mlrategamma}{\gamma}
\newcommand{\idxset}{\mathcal{I}}
\newcommand{\rotationmatrix}{R}
\newcommand{\scalematrix}{\sqrt{\Lambda}}
\newcommand{\angledeg}{\theta}
\newcommand{\grid}{G}
\newcommand{\discrmatname}{A}
\newcommand{\discrrhsname}{\bb}
\newcommand{\discrsolname}{\bu}
\newcommand{\grididxone}{p}
\newcommand{\grididxtwo}{q}
\newcommand{\grididxonestart}{1}
\newcommand{\grididxonestop}{\bar{\grididxone}}
\newcommand{\grididxtwostart}{1}
\newcommand{\grididxtwostop}{\bar{\grididxtwo}}
\newcommand{\unit}{\be}
\newcommand{\msgweight}{\bkappa}
\newcommand{\weight}{\msgweight}
\newcommand{\pmf}{\pi}
\newcommand{\tol}{\varepsilon}
\newcommand{\restrict}{\calR}
\newcommand{\prolong}{\calP}
\newcommand{\indicatorfunc}{\chi}
\newcommand{\nbofsamplessub}{\tilde{\nbofsamples}}
\newcommand{\diffcoeff}{\diffcoeffname(\pointname, \samplename)}
\newcommand{\grf}{\grfname(\pointname, \samplename)}
\newcommand{\sol}{\solname(\pointname, \samplename)}
\newcommand{\rhs}{h(\pointname)}
\newcommand{\covmodel}{\covmodelname(\pointname_1,\pointname_2)}
\newcommand{\covdistance}{\covdistancename(\pointname_1,\pointname_2)}
\newcommand{\E}[1]{\mathbb{E}\mathopen{}\left[#1\right]\mathclose{}}
\newcommand{\V}[1]{\mathbb{V}\mathopen{}\left[#1\right]\mathclose{}}
\newcommand{\estimatorfor}[1]{\mathcal{#1}}
\newcommand{\bsinfty}{\boldsymbol{\infty}}
\newcommand{\prob}[1]{\mathrm{Pr}\left[{#1}\right]}
\newcommand{\cost}[1]{\text{cost}\left({#1}\right)}
\newcommand{\order}[1]{O\mathopen{}\left({#1}\mathclose{}\right)}
\newcommand{\MIMC}{\mathcal{\qoi}_{\maxlevel,\{\nbofsamples_\idx\}}}
\newcommand{\uMIMC}{\mathcal{\qoi}_{\bsinfty,\nbofsamples}}
\newcommand{\uMIMCr}{\mathcal{\qoi}^\frakr_{\bsinfty,\nbofsamples}}
\begin{document}

\maketitle

\begin{abstract}
In many models used in engineering and science, material properties are uncertain or spatially varying. For example, in geophysics, and porous media flow in particular, the uncertain permeability of the material is modelled as a random field. These random fields can be highly anisotropic. Efficient solvers, such as the Multiple Semi-Coarsened Multigrid (MSG) method, see~\cite{mulder89, naik93, oosterlee95}, are required to compute solutions for various realisations of the uncertain material. The MSG method is an extension of the classic Multigrid method, that uses additional coarse grids that are coarsened in only a single coordinate direction. In this sense, it closely resembles the extension of Multilevel Monte Carlo (MLMC)~\cite{giles08} to Multi-Index Monte Carlo (MIMC)~\cite{haji16}. We present an unbiased MIMC method that reuses the MSG coarse solutions, similar to the work in~\cite{kumar17}. Our formulation of the estimator can be interpreted as the problem of learning the unknown distribution of the number of samples across all indices, and unifies the previous work on adaptive MIMC~\cite{robbe18} and unbiased estimation~\cite{rhee15}. We analyse the cost of this new estimator theoretically and present numerical experiments with various anisotropic random fields, where the unknown coefficients in the covariance model are considered as hyperparameters. We illustrate its robustness and superiority over unbiased MIMC without sample reuse.
\end{abstract}

\section{Introduction}\label{sec:intro}

Consider the elliptic PDE with random coefficients
\begin{equation}\label{eq:PDE}
\begin{aligned}
- \nabla \cdot ( \diffcoeff \nabla \sol ) &= \rhs \qquad &&\text{with } \pointname \in \domain, \;\samplename \in \samplespace, \text{ and}\\
\sol &= 0 \qquad &&\text{on } \partial \domain,
\end{aligned}
\end{equation}
where $\domain$ is a bounded domain in $\mathbb{R}^\ndims$, $\ndims=1,2,3$, with boundary $\partial \domain$, and $\samplespace$ is the sample space of a probability space $(\samplespace, \mathcal{F}, P)$. We assume that the diffusion coefficient $\diffcoeffname : \domain \times \samplespace \rightarrow \mathbb{R}$ is given as a lognormal random field, i.e., $\diffcoeff = \exp(\grf)$, where $\grfname$ is a zero-mean Gaussian random field with given covariance function. In this paper, we consider the so-called Mat\'ern covariance function~\cite{matern60}
\begin{equation}\label{eq:matern}
\covmodel = \frac{2^{1-\smoothness}}{\Gamma(\smoothness)}(2\sqrt{\smoothness}\;\covdistance)^{\smoothness} K_{\smoothness}(2\sqrt{\smoothness}\;\covdistance)
\end{equation}
with smoothness parameter $\nu$, and where the distance function $\covdistance$ is given by
\begin{equation}
\covdistance = \|T (\pointname_1-\pointname_2)\|_2,
\end{equation}
with $T$ a suitable transformation matrix. In what follows, we are interested in the two-dimensional case ($d=2$), and assume the linear transformation matrix $T$ is a concatenation of a scaling and a rotation, i.e.,
\begin{equation}
\begin{aligned}
T = \scalematrix\rotationmatrix
\quad
\text{with}
\quad
\scalematrix = \begin{pmatrix}
1/\sqrt{\anisotropy\lengthscale} & 0 \\
0 & 1/\sqrt{\lengthscale} \\
\end{pmatrix}
\quad
\text{and}
\quad
\rotationmatrix = \begin{pmatrix}
\cos(\angledeg) & -\sin(\angledeg) \\
\sin(\angledeg) & \cos(\angledeg) \\
\end{pmatrix},
\end{aligned}
\end{equation}
where $\lengthscale$ is the length scale, $\anisotropy$ is the anisotropic ratio and $\angledeg$ is the rotation angle. Some realisations of the random field $\grf$, for relevant combinations of the parameters in Mat\'ern covariance model~\eqref{eq:matern}, are shown in~\figref{fig:grf}.

%
% figure: rate
%
\begin{figure}
	%\centering
	\hspace{-3.5em}
	\tikzexternalenable%
	\tikzsetnextfilename{grfs.tex}%
	%!TEX root = ../ROBBE_PIETERJAN_paper.tex
\setlength{\figurewidth}{0.4\textwidth}
\setlength{\figureheight}{0.4\textwidth}
\begin{tikzpicture}[outer sep=3pt]
	\begin{groupplot}[%
		group style={
			every plot/.append style = {image axis},
			group size=3 by 4,
			 horizontal sep=1em,
			 vertical sep=1em,
			 group name=my plots
		}
	]
	\nextgroupplot[%
	]
	\addplot[on layer=axis background] graphics[xmin=0,ymin=0,xmax=1,ymax=1] {figures/grf_1|2_1|4_1|4_0.png};
	\coordinate (t1) at (axis description cs:.5,1.1);
	\pgfplotsset{
		after end axis/.append code={%
			\node [] at (t1) {$\anisotropy = 1/4$};
		}
	}

	\nextgroupplot[%
	]
	\addplot[on layer=axis background] graphics[xmin=0,ymin=0,xmax=1,ymax=1] {figures/grf_1|2_1|4_1|8_0.png};
	\coordinate (t2) at (axis description cs:.5,1.1);
	\pgfplotsset{
		after end axis/.append code={%
			\node [] at (t2) {$\anisotropy = 1/8$};
		}
	}	

	\nextgroupplot[%
	]
	\addplot[on layer=axis background] graphics[xmin=0,ymin=0,xmax=1,ymax=1] {figures/grf_1|2_1|4_1|16_0.png};
	\coordinate (t3) at (axis description cs:.5,1.1);
	\pgfplotsset{
		after end axis/.append code={%
			\node [] at (t3) {$\anisotropy = 1/16$};
		}
	}
	\coordinate (s1) at (axis description cs:1.1,.5);
	\pgfplotsset{
		after end axis/.append code={%
			\node [rotate=-90] at (s1) {$\rotation = 0\degree$};
		}
	}

	\nextgroupplot[%
	]
	\addplot[on layer=axis background] graphics[xmin=0,ymin=0,xmax=1,ymax=1] {figures/grf_1|2_1|4_1|4_10.png};

	\nextgroupplot[%
	]
	\addplot[on layer=axis background] graphics[xmin=0,ymin=0,xmax=1,ymax=1] {figures/grf_1|2_1|4_1|8_10.png};

	\nextgroupplot[%
	]
	\addplot[on layer=axis background] graphics[xmin=0,ymin=0,xmax=1,ymax=1] {figures/grf_1|2_1|4_1|16_10.png};
	\coordinate (s2) at (axis description cs:1.1,.5);
	\pgfplotsset{
		after end axis/.append code={%
			\node [rotate=-90] at (s2) {$\rotation = 10\degree$};
		}
	}

	\nextgroupplot[%
		ylabel={\vphantom{residual norm $\log_{10}(\|\bsr_i\|_h)$}},
		ylabel style={at={(axis description cs:-.2,1.1)}}
	]
	\addplot[on layer=axis background] graphics[xmin=0,ymin=0,xmax=1,ymax=1] {figures/grf_1|2_1|4_1|4_20.png};

	\nextgroupplot[%
	]
	\addplot[on layer=axis background] graphics[xmin=0,ymin=0,xmax=1,ymax=1] {figures/grf_1|2_1|4_1|8_20.png};

	\nextgroupplot[%
	]
	\addplot[on layer=axis background] graphics[xmin=0,ymin=0,xmax=1,ymax=1] {figures/grf_1|2_1|4_1|16_20.png};
	\coordinate (s3) at (axis description cs:1.1,.5);
	\pgfplotsset{
		after end axis/.append code={%
			\node [rotate=-90] at (s3) {$\rotation = 20\degree$};
		}
	}

	\nextgroupplot[%
	]
	\addplot[on layer=axis background] graphics[xmin=0,ymin=0,xmax=1,ymax=1] {figures/grf_1|2_1|4_1|4_30.png};

	\nextgroupplot[%
	]
	\addplot[on layer=axis background] graphics[xmin=0,ymin=0,xmax=1,ymax=1] {figures/grf_1|2_1|4_1|8_30.png};

	\nextgroupplot[%
	]
	\addplot[on layer=axis background] graphics[xmin=0,ymin=0,xmax=1,ymax=1] {figures/grf_1|2_1|4_1|16_30.png};
	\coordinate (s4) at (axis description cs:1.1,.5);
	\pgfplotsset{
		after end axis/.append code={%
			\node [rotate=-90] at (s4) {$\rotation = 30\degree$};
		}
	}
	
	\end{groupplot}
	
\end{tikzpicture}%
	\tikzexternaldisable%

	\caption{\label{fig:grf}Realisations of a Gaussian random field with Mat\'ern covariance function with parameters $\corrlength =1/4$, $\smoothness =1/2$, $\anisotropy \in \{1/4, 1/8, 1/16\}$ (left to right) and $\rotation \in \{0\degree, 10\degree, 20\degree, 30\degree\}$ (top to bottom).}
\end{figure}

These problems find their application in groundwater hydrology, where equation~\eqref{eq:PDE} describes the steady-state single-phase flow through a porous medium, see~\cite{hiscock14}.  In this setting, $\diffcoeff$ is a random  field representing the permeability of the porous medium. In stochastic modelling of groundwater flow, it is well known that a lognormal random field may accurately represent the permeability of a naturally occurring porous medium, see, e.g.,~\cite{delhomme79}. Typical values of interest for the Mat\'ern covariance parameters in hydrology applications are a smoothness $\nu<1$, length scale $\lengthscale\ll1$ and anisotropic ratio $\anisotropy\ll1$. The solution $\sol$ of~\eqref{eq:PDE} is a random field that represents the fluid pressure at equilibrium.

Depending on the characteristics of the random field, and the way it is represented, many uncertain parameters may be required to accurately model its variability. Sampling-based methods, such as the Monte Carlo (MC) method, are the preferred tool to deal with these many uncertainties. They require the solution of a deterministic PDE for every realisation $\samplename_i \in \samplespace$, $i=1,\ldots,\nbofsamples$, with $\nbofsamples$ the number of samples. From this sample set, conclusions are drawn about the statistics of a quantity of interest $\qoi=F(\sol)$, such as the expected value, variance or higher-order moments. Here, $F$ is a functional applied to the solution $\sol$ of the PDE, for example, a point evaluation or an average over a subdomain. The classic MC method is often viewed as impractical due to the large number of expensive realisations required. It is a well-known but notorious fact that the \emph{root mean square error} decays as slowly as $1/\sqrt{N}$ for estimating an expected value, nevertheless independent of the number of uncertain parameters.

So-called \emph{multilevel methods} have been proposed to lower the MC cost, using samples of the PDE on coarser grids. These methods are known as Multilevel Monte Carlo (MLMC) methods, see, e.g.,~\cite{giles08}, or Multi-Index Monte Carlo (MIMC) methods, see~\cite{haji16}. When using a particular iterative method, called Full Multigrid (FMG), to solve the deterministic PDE underlying every sample of~\eqref{eq:PDE}, coarse solutions returned by the solver can be reused as samples in the multilevel estimator. This idea was first proposed in the context of MLMC in~\cite{kumar17}, and, in the present work, we would like to extend this concept to the MIMC setting.

The problem at hand can be decomposed into two different subproblems, that will be addressed accordingly in the remainder of this text.
\begin{enumerate}
\item[a)] We require efficient and robust deterministic multilevel solvers that compute a solution of problem~\eqref{eq:PDE} for every possible realisation $\samplename\in\samplespace$.
\item[b)] We require more efficient multilevel methods to compute statistics of a quantity of interest derived from the solution of~\eqref{eq:PDE}, that overcome the inefficiency of standard MC.
\end{enumerate}

\section{Multiple Semi-Coarsened Multigrid}\label{sec:MSG}

The convergence rate of Multigrid (MG) algorithms based on point relaxation smoothers, such as damped Jacobi or Gauss--Seidel, degenerate on problems with strong anisotropies, see~\cite{hackbusch85, trottenberg00}. Robustness can be improved by using a more powerful smoother, such as an incomplete LU (ILU)-type smoother or line ($d=2$) or plane ($d=3$) smoothers. However, these smoothers are typically more expensive, and implementation on parallel machines is nontrivial~\cite{chow06}. An alternative way to recover good multigrid convergence rates, which avoids line and plane relaxations altogether, is to use multiple coarse grids formed by semi-coarsening in each of the coordinate directions. This method is known as Multiple Semi-Coarsened Multigrid (MSG). Although first described as a nonlinear Full Approximation Scheme (FAS)-type algorithm by Mulder in~\cite{mulder89}, a linear MSG correction scheme was introduced and analysed in~\cite{naik93, oosterlee95}. See~\figref{fig:semicoarsening} for a graphical comparison of standard and semi-coarsened coarse grids. In this paper, we use MSG for robustness with respect to the random input diffusion coefficient.

%
% figure: rate
%
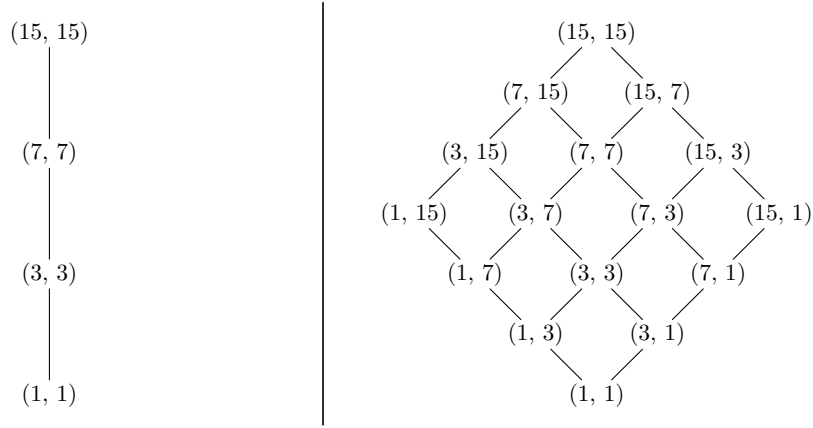
\begin{figure}
	\centering
	%\includetikz{convergence_factors}{chapters/specialized/art/convergence_factor.tex}
\begin{tikzpicture}[inner sep=0pt,outer sep=0pt,scale=.8, every node/.style={transform shape}]
\foreach \x in {0,...,3}
	\foreach \y in {0,...,3}
	{
		\pgfmathtruncatemacro{\xlabel}{2*2^\x-1}
		\pgfmathtruncatemacro{\ylabel}{2*2^\y-1}
		\node [inner sep=7pt]  (\x\y) at (4.5+\x-\y,\x+\y) {};
		\node []  at (4.5+\x-\y,\x+\y) {(\xlabel, \ylabel)};
	}
\foreach \x in {0,...,3}
	\foreach \y [count=\yi] in {0,...,2}{
		\draw[-, line cap=round] (\x\y)--(\x\yi);
		\draw[-, line cap=round] (\y\x)--(\yi\x);
	}
%\foreach \x in {0,...,6}{
%	\node [anchor=west]  at (-6,\x) {level \x};
%}
\foreach \x in {0,...,3}{
	\pgfmathtruncatemacro{\xlabel}{2*2^\x-1}
	\node [inner sep=7pt]  (a\x) at (-4.5,2*\x) {};
	\node []  at (-4.5,2*\x) {(\xlabel, \xlabel)};
}
\foreach \x [count=\i] in {0,...,2}{
	\draw[-, line cap=round] (a\x)--(a\i);
}
\draw[-,semithick, line cap=round] (0,-0.5)--(0,6.5);
\useasboundingbox (-7,0) rectangle (7,6);
\end{tikzpicture}
	\caption{\label{fig:semicoarsening}A comparison of standard coarse grids (left) and multiple semi-coarsened grids (right) for $\ndims=2$. Number of degrees of freedom in each coordinate direction for $\bar{p} = \bar{q} = 4$.}
\end{figure}

We suppose, for simplicity, that the domain of the model problem is the unit square, i.e., $\domain=[0,1]^2$. Define a sequence of coarse grids as
\begin{equation}\label{eq:hierarchy}
\grid^{(\grididxone,\grididxtwo)} = \{ (i \cdot \delta x, \; j \cdot \delta y) : i = 0, 1, \ldots, 2^\grididxone, \; j = 0, 1, \ldots, 2^\grididxtwo\}
\end{equation}
with  $\delta x = 2^{-\grididxone}$ and $\delta y = 2^{-\grididxtwo}$, $\grididxone = \grididxonestart, \ldots, \grididxonestop$, and $\grididxtwo = \grididxtwostart, \ldots, \grididxtwostop$. The coarsest grid is thus $\grid^{(\grididxonestart, \grididxtwostart)}$, and the finest grid is $\grid^{(\grididxonestop,\grididxtwostop)}$. 
Note that a standard coarsening uses only grids with values $\grididxone=\grididxtwo$. On this hierarchy of grids, we discretise~\eqref{eq:PDE} using finite differences, resulting in a discrete version of the PDE,
\begin{equation}
\discrmatname^{(\grididxone,\grididxtwo)} \discrsolname^{(\grididxone,\grididxtwo)} = \discrrhsname^{(\grididxone,\grididxtwo)},
\end{equation}
on grid $\grid^{(\grididxone,\grididxtwo)}$. Important for the remainder of this text is that the coarse matrices $\discrmatname^{(\grididxone,\grididxtwo)}$ are obtained from direct discretisation of the differential equation, and not in variational form (as based on the \emph{Galerkin condition}).

%
% algorithm: MSG MU CYCLE
%
\begin{algorithm}
\begin{algorithmic}[1]
\Statex \textbf{input:} level parameter $L$
\Statex \textbf{output:} approximations $\bsv^{(\grididxone, \grididxtwo)}$ for the solution of $A^{(\grididxone, \grididxtwo)} \bsu^{(\grididxone, \grididxtwo)} = \bsb^{(\grididxone, \grididxtwo)}, \grididxone + \grididxtwo = L$
\Statex
\Procedure{MSG-$\mu$-CYCLE}{$L$}
\If{$L=0$}
\State $\bsv^{(0, 0)} \gets \bsb^{(0, 0)} / A^{(0, 0)}$
\Else
\ForEach{grid $\grid^{(\grididxone, \grididxtwo)}$ \textbf{where} $\grididxone + \grididxtwo = L$}
	\State \textbf{call} $\text{SMOOTH}(\bsv^{(\grididxone, \grididxtwo)}, A^{(\grididxone, \grididxtwo)}, \bsb^{(\grididxone, \grididxtwo)})$ $\nu_1$ times
\EndFor
%\If{$L>0$}
\ForEach{grid $\grid^{(\grididxone, \grididxtwo)}$ \textbf{where} $\grididxone + \grididxtwo = L$}
\State $\bsr^{(\grididxone, \grididxtwo)} \gets \bsb^{(\grididxone, \grididxtwo)} - A^{(\grididxone, \grididxtwo)}\bsv^{(\grididxone, \grididxtwo)}$ \label{chap6:alg:MSGMU:9}
\EndFor
\ForEach{grid $\grid^{(\grididxone, \grididxtwo)}$ \textbf{where} $\grididxone + \grididxtwo = L-1$}
	\State $\bsb^{(\grididxone, \grididxtwo)}\gets$ the restricted residual from~\eqref{eq:restriction}
	\State $\bsv^{(\grididxone, \grididxtwo)}\gets\boldsymbol{0}$
\EndFor
\State \textbf{call} MSG-$\mu$-CYCLE$(L-1)$ $\mu$ times
\ForEach{grid $\grid^{(\grididxone, \grididxtwo)}$ \textbf{where} $\grididxone + \grididxtwo = L$}
	\State compute the weight factors $\msgweight_x^{(\grididxone,\grididxtwo)}$ and $\msgweight_y^{(\grididxone,\grididxtwo)}$ using~\eqref{chap6:eq:weight_factors}
	\State $\bsv^{(\grididxone, \grididxtwo)}\gets\bsv^{(\grididxone, \grididxtwo)} + $ the interpolated correction from~\eqref{eq:interpolation}% to $\bsv^{(\grididxone, \grididxtwo)}$
	\State \textbf{call} $\text{SMOOTH}(\bsv^{(\grididxone, \grididxtwo)}, A^{(\grididxone, \grididxtwo)}, \bsb^{(\grididxone, \grididxtwo)})$ $\nu_2$ times
\EndFor
\EndIf
\EndProcedure
\end{algorithmic}
\caption{Multiple Semi-coarsened Multigrid $\mu$-cycle (recursive definition)}
\label{alg:MSGMU}
\end{algorithm}

We now assume that we have at our disposal inter-grid transfer operators between the different grids, defined in the following way:
\begin{align}
&\restrict_{x}^{(\grididxone,\grididxtwo)} : \grid^{(\grididxone+1,\grididxtwo)} \mapsto \grid^{(\grididxone, \grididxtwo)} \quad\text{and}\quad \restrict_{y}^{(\grididxone, \grididxtwo)} : \grid^{(\grididxone, \grididxtwo+1)} \mapsto \grid^{(\grididxone, \grididxtwo)}&\text{(restriction)}, \text{ and} \\
&\prolong_{x}^{(\grididxone,\grididxtwo)} : \grid^{(\grididxone-1,\grididxtwo)} \mapsto \grid^{(\grididxone, \grididxtwo)} \quad\text{and}\quad \prolong_{y}^{(\grididxone,\grididxtwo)} : \grid^{(\grididxone,\grididxtwo-1)} \mapsto \grid^{(\grididxone, \grididxtwo)}&\text{(prolongation)}. 
\end{align}

These transfer operators are simple one-dimensional operators. For example, the restriction operator can be chosen as the well-known full weighting procedure. In stencil notation, these operators are written as
\begin{align}
\restrict_{x}^{(\grididxone,\grididxtwo)} = \frac{1}{4}\scalebox{1.5}[1.5]{[}\begin{matrix}
\raisebox{1pt}{\makebox[1em]{1}} & \raisebox{1pt}{2} & \raisebox{1pt}{\makebox[1em]{1}}
\end{matrix}\scalebox{1.5}[1.5]{]}_{\grididxone+1,\grididxtwo}^{\grididxone,\grididxtwo} \quad \text{and} \quad
\restrict_{y}^{(\grididxone,\grididxtwo)} = \frac{1}{4}\begin{bmatrix}
\makebox[1em]{1} \\ 2 \\ 1
\end{bmatrix}_{\;\grididxone,\grididxtwo+1}^{\;\grididxone,\grididxtwo}.
\end{align}
Similarly, the prolongation operators $\prolong_{x}^{(\grididxone,\grididxtwo)}$ and $\prolong_{y}^{(\grididxone,\grididxtwo)}$ could be chosen as the usual linear interpolation operators.

%\textcolor{red}{ First, the residual $\br^{(\grididxone,\grididxtwo)}$ is transferred recursively to each semi-coarsened grid in $x$- and $y$-direction. Next, the PDE is solved exactly on the coarsest grid using a direct method. Finally, the correction $\bu^{(\grididxone,\grididxtwo)}$ is transferred back from the coarse grids to the finest grid. Other cycling strategies, such as W- or F-cycles, are constructed accordingly. }

A Multigrid cycle with $\grididxonestop+\grididxtwostop-1$ levels is now performed in the usual way, see Algorithm~\ref{alg:MSGMU}.  In each step of the algorithm, multiple semi-coarsened grids are involved (one in each coordinate direction). In the two-dimensional case considered here, essentially two semi-coarsened grids should be treated. Hence, we must specify how to transfer and combine the information from these two grids. We first describe our approach for restriction, and subsequently for prolongation. 

The residual $\bsr^{(\grididxone, \grididxtwo)} = \bsb^{(\grididxone, \grididxtwo)} - A^{(\grididxone, \grididxtwo)}\bsv^{(\grididxone, \grididxtwo)}$ is restricted to grid $\grid^{(\grididxone,\grididxtwo)}$ simply by taking the average of the restricted residuals from both finer grids $\grid^{(\grididxone+1,\grididxtwo)}$ and $\grid^{(\grididxone,\grididxtwo+1)}$, if they exist, i.e.,
\begin{equation}\label{eq:restriction}
%\br^{(\grididxone,\grididxtwo)} = 
\begin{cases}
\restrict_{x}^{(\grididxone,\grididxtwo)}\br^{(\grididxone+1,\grididxtwo)} &\text{if } \grididxtwo=\grididxtwostop,\\
\restrict_{y}^{(\grididxone,\grididxtwo)}\br^{(\grididxone,\grididxtwo+1)} &\text{if } \grididxone=\grididxonestop,\\
\frac{1}{2}\restrict_{x}^{(\grididxone,\grididxtwo)}\br^{(\grididxone+1,\grididxtwo)} + \frac{1}{2}\restrict_{y}^{(\grididxone,\grididxtwo)}\br^{(\grididxone,\grididxtwo+1)} &\text{otherwise}.\\
\end{cases}
\end{equation}
%Note that, in the algorithm, we used some economy in notation by representing the residual by $\bsb^{(\grididxone,\grididxtwo)}$ rather than $\bsr^{(\grididxone,\grididxtwo)}$, see line~\ref{chap6:alg:MSGMU:9} in Algorithm~\ref{alg:MSGMU}.

Recall that only semi-coarsening in the direction of strongest coupling can overcome the ineffectiveness of a smoother based on point relaxation \cite{hackbusch85}. In the case of grid-aligned anisotropies, only half of the MSG grids will be effective in reducing the high-frequency components of the error. Hence, it is crucial that a weighted average of interpolated corrections from both coarser grids $\grid^{(\grididxone-1,\grididxtwo)}$ and $\grid^{(\grididxone,\grididxtwo-1)}$ is used to update the solution on grid $\grid^{(\grididxone,\grididxtwo)}$, if both exist, i.e.,
\begin{equation}\label{eq:interpolation}
%\bsv^{(\grididxone,\grididxtwo)} = 
\begin{cases}
\prolong_{x}^{(\grididxone,\grididxtwo)}\bsv^{(\grididxone-1,\grididxtwo)} &\text{if } \grididxtwo=\grididxtwostart,\\
\prolong_{y}^{(\grididxone,\grididxtwo)}\bsv^{(\grididxone,\grididxtwo-1)} &\text{if } \grididxone=\grididxonestart,\\
\weight_x^{(\grididxone,\grididxtwo)}\prolong_{x}^{(\grididxone,\grididxtwo)}\bsv^{(\grididxone-1,\grididxtwo)} + 
\weight_y^{(\grididxone,\grididxtwo)}\prolong_{y}^{(\grididxone,\grididxtwo)}\bsv^{(\grididxone,\grididxtwo-1)} &\text{otherwise},\\
\end{cases}
\end{equation}
where $\weight_x^{(\grididxone,\grididxtwo)}$ and $\weight_y^{(\grididxone,\grididxtwo)}$ are appropriate weight factors. The original MSG paper, \cite{mulder89}, uses weight factors $\msgweight_x^{(\grididxone,\grididxtwo)} = \msgweight_y^{(\grididxone,\grididxtwo)} = 1/2$. However, these weight factors are not effective in the case of strong alignment along one of the coordinate directions, since the appropriate grids get only half of the necessary information. For our model problem, where the diffusion coefficient is given as a random field, the optimal choice for the weight factors will vary over the domain, and matrix-dependent prolongation is required to achieve acceptable convergence rates. We follow the approach from~\cite{naik93}.

Define
\begin{equation}
f_{i,j} = \cos(i \pi) \quad\text{and}\quad g_{i,j} = \cos(j \pi) \quad \text{for} \quad 0 \leq i \leq 2^\grididxone, 0 \leq j \leq 2^\grididxtwo.
\end{equation}
These are two high-frequency Fourier modes, one oscillatory in the $x$-direction, the other oscillatory in the $y$-direction, that locally look like
\begin{equation}
f = \begin{matrix}
\hphantom{\shortminus}1 & \shortminus1 & \hphantom{\shortminus}1 & \shortminus1 \\
\hphantom{\shortminus}1 & \shortminus1 & \hphantom{\shortminus}1 & \shortminus1 \\
\hphantom{\shortminus}1 & \shortminus1 & \hphantom{\shortminus}1 & \shortminus1 \\
\hphantom{\shortminus}1 & \shortminus1 & \hphantom{\shortminus}1 & \shortminus1 \\
\end{matrix}\quad\text{and}\quad
g= \begin{matrix*}[r]
1 & 1 & 1 & 1 \\
\shortminus1 & \shortminus1 & \shortminus1 & \shortminus1 \\
1 & 1 & 1 & 1 \\
\shortminus1 & \shortminus1 & \shortminus1 & \shortminus1 \\
\end{matrix*}\quad.
\end{equation}
Appropriate weight factors can be computed by applying the discrete operator $A^{(\grididxone,\grididxtwo)}$ to $f$ and $g$, that is, compute
\begin{align}
\lambda_x = A^{(\grididxone,\grididxtwo)}f \quad \text{and} \quad \lambda_y= A^{(\grididxone,\grididxtwo)}g,
\end{align}
and define the weight factors
\begin{align}\label{chap6:eq:weight_factors}
\msgweight_x^{(\grididxone,\grididxtwo)} \coloneqq \frac{\lambda_x^2}{\lambda_x^2 + \lambda_y^2} \quad \text{and} \quad \msgweight_y^{(\grididxone,\grididxtwo)} \coloneqq \frac{\lambda_y^2}{\lambda_x^2 + \lambda_y^2}.
\end{align}

%In the case of strong alignment along one of the coordinate directions, all of the information is transferred from the coarse grid to the fine grid. 
It can be shown that the convergence rate of the MSG method, using these weight factors, can be made arbitrarily small if sufficient relaxation steps are performed, see~\cite{naik93}. Alternative forms for the weight factors $\weight_x^{(\grididxone,\grididxtwo)}$ and $\weight_y^{(\grididxone,\grididxtwo)}$ can be found in~\cite{dezeeuw90, dendy82, oosterlee95}.

The recursive procedure in Algorithm~\ref{alg:MSGMU} is started by constructing the hierarchy of grids given in~\eqref{eq:hierarchy}, setting an initial guess $\bsv^{(\bar{p},\bar{q})}=\bszero$ and calling the procedure with $L=\bar{p}+\bar{q}$. When $L=0$, there is only one unknown, and the system can be solved simply by inverting the matrix $A^{(0,0)}$, which is then simply a scalar. A Multigrid V-cycle is obtained when $\mu=1$, a W-cycle is obtained when $\mu=2$.

As in standard MG, it is possible to combine MSG with \emph{nested iteration}. In nested iteration, an initial approximation for an iterative method on a fine grid is provided by the computation and subsequent interpolation of solutions on coarser grids. A good initial guess for the solution on the fine grid means that just a few iterations will be made with a multigrid V- or W-cycle in order to converge the solution to discretization accuracy, see, e.g.,~\cite{hackbusch85}. Combined with MG, this method is called Full Multigrid (FMG), or, in the MSG context, Full Multiple Semi-Coarsened Multigrid (FMSG), see Algorithm~\ref{alg:FMSG}. Some remarks concerning this algorithm will be given next. First, on line~\ref{alg:FMSG:line6} of the algorithm, it is the right-hand side that is restricted, not the residual. However, equation~\eqref{eq:restriction} can still be used when the residuals $\bsr^{(\grididxone+1,\grididxtwo)}$ and $\bsr^{(\grididxone,\grididxtwo+1)}$ are replaced by right-hand side vectors $\bsb^{(\grididxone+1,\grididxtwo)}$ and $\bsb^{(\grididxone,\grididxtwo+1)}$, respectively. Secondly, on line~\ref{alg:FMSG:line11}, it is customary to use a higher-order interpolation scheme to get improved initial conditions for the subsequent V- or W-cycles. In our numerical experiments, we will use cubic interpolation. For example, the stencil for cubic interpolation in the $x$-direction is given by
\begin{equation}
\prolong_{x}^{(\grididxone,\grididxtwo)} = \frac{1}{16}\scalebox{1.5}[1.5]{[}\begin{matrix}
\raisebox{1pt}{\makebox[1em]{-1}} & \raisebox{1pt}{\makebox[1em]{0}} & \raisebox{1pt}{\makebox[1em]{9}} & \raisebox{1pt}{\makebox[1em]{16}} & \raisebox{1pt}{\makebox[1em]{9}} & \raisebox{1pt}{\makebox[1em]{0}} & \raisebox{1pt}{\makebox[1em]{-1}}
\end{matrix}\scalebox{1.5}[1.5]{]}_{\grididxone,\grididxtwo}^{\grididxone+1,\grididxtwo}
\end{equation}
or, at the left-most boundary
\begin{equation}
\prolong_{x}^{(\grididxone,\grididxtwo)} = \frac{1}{16}\scalebox{1.5}[1.5]{[}\begin{matrix}
\raisebox{1pt}{\makebox[1em]{10}} & \raisebox{1pt}{\makebox[1em]{16}} & \raisebox{1pt}{\makebox[1em]{9}} & \raisebox{1pt}{\makebox[1em]{0}} & \raisebox{1pt}{\makebox[1em]{-1}}
\end{matrix}\scalebox{1.5}[1.5]{]}_{\grididxone,\grididxtwo}^{\grididxone+1,\grididxtwo}
\end{equation}
and similar for the right-most boundary. Thirdly and lastly, in the FMG algorithm, it is common to use an adaptive number of V- or W-cycles $\nu_0$. This number is then increased until $\|\bsr^{(p,q)}\|/\|\bsb^{(p,q)}\|\leq\varepsilon$, where $\varepsilon$ is a tolerance provided by the user.

%
% algorithm: MSG F CYCLE
%
\begin{algorithm}
\begin{algorithmic}[1]
\Statex \textbf{input:} level parameter ${L}$
\Statex \textbf{output:} approximations $\bsv^{(\grididxone, \grididxtwo)}$ for the solution of $A^{(\grididxone, \grididxtwo)}\bsu^{(\grididxone, \grididxtwo)} = \bsb^{(\grididxone, \grididxtwo)}, \grididxone + \grididxtwo = L$
\Statex
\Procedure{MSG-F-CYCLE}{${L}$}
\If{$L=0$}
\State $\bsv^{(0, 0)} \gets \boldsymbol{0}$
\Else
\ForEach{grid $\grid^{(\grididxone, \grididxtwo)}$ \textbf{where} $\grididxone + \grididxtwo = {L}-1$}
	\State $\bsb^{(\grididxone, \grididxtwo)}\gets$ the restricted right-hand side from~\eqref{eq:restriction}\label{alg:FMSG:line6}
\EndFor
\State \textbf{call} \text{MSG-F-CYCLE}$({L}-1)$
\ForEach{grid $\grid^{(\grididxone, \grididxtwo)}$ \textbf{where} $\grididxone + \grididxtwo = {L}$}
	\State compute the weight factors $\msgweight_x^{(\grididxone,\grididxtwo)}$ and $\msgweight_y^{(\grididxone,\grididxtwo)}$  using~\eqref{chap6:eq:weight_factors}
	\State $\bsv^{(\grididxone, \grididxtwo)}\gets$ the interpolated solution from~\eqref{eq:interpolation}\label{alg:FMSG:line11}
\EndFor
\EndIf
\State \textbf{call} {MSG-$\mu$-CYCLE}$({L})$ $\nu_0$ times
\EndProcedure
\end{algorithmic}
\caption{Multiple Semi-coarsening Multigrid F-cycle (recursive definition)}
\label{alg:FMSG}
\end{algorithm}

We test our implementation of MSG on the model elliptic PDE with lognormal coefficients given in~\eqref{eq:PDE}, defined on the unit square $D=[0,1]^2$, where we assume a Mat\'ern covariance function for the random field, and put the source term $\rhs=1$. In \figref{fig:residual_norms}, we plot the evolution of the norm of the residual for classic MG and MSG over 50 multigrid W-cycles, for 100 samples of the random field. We used two pre- and postsmoothing steps with red-black Gauss--Seidel ($\nu_1=\nu_2=2$), and found numerically that a small damping factor before the coarse grid correction improves the overall convergence rate, see~\cite{oosterlee95}. W-cycles ($\mu=2$) are used because they are known to be generally more stable than V-cycles ($\mu=1$)~\cite{trottenberg00}. Notice the expected failure of the multigrid method with standard coarsening (MG) as the anisotropic ratio $\anisotropy$ decreases. The MSG method, on the other hand, offers fast convergence for a wide range of parameters. As the anisotropy is less grid-aligned (larger $\angledeg$) and for a fixed anisotropic ratio $\anisotropy$ (as shown in the middle column of~\figref{fig:residual_norms}), the convergence of classic MG is improved, a result also reported in~\cite{oosterlee95}.

%Because we expect MG-like convergence for the 1-d problems along lines of constant X, we should coarsen the grid in the Y- direction, but not in the X-direction.

%
% figure: rate
%
\begin{figure}
	\centering
	\pgfmathtruncatemacro\levelcntr{3}
	\tikzexternalenable%
	\tikzsetnextfilename{resnorms.tex}%
	%!TEX root = ../ROBBE_PIETERJAN_paper.tex
\setlength{\figurewidth}{0.3\textwidth}
\setlength{\figureheight}{0.3\textwidth}
\begin{tikzpicture}[outer sep=3pt]
	\begin{groupplot}[%
		group style={
			every plot/.append style = {resnorm axis},
			group size=3 by 4,
			 horizontal sep=1em,
			 vertical sep=1em,
			 group name=my plots
		}
	]

	\nextgroupplot[%
		x tick label style={opacity=0},
	]
	\addmgresnorm{\levelcntr}{1|4}{0};\label{MG}
	\addmsgresnorm{\levelcntr}{1|4}{0};\label{MSG}
	\coordinate (t1) at (axis description cs:.5,1.1);
	\pgfplotsset{
		after end axis/.append code={%
			\node [] at (t1) {$\anisotropy = 1/4$};
		}
	}

	\nextgroupplot[%
		tick label style={opacity=0}
	]
	\addmgresnorm{\levelcntr}{1|8}{0}
	\addmsgresnorm{\levelcntr}{1|8}{0}
	\coordinate (t2) at (axis description cs:.5,1.1);
	\pgfplotsset{
		after end axis/.append code={%
			\node [] at (t2) {$\anisotropy = 1/8$};
		}
	}	

	\nextgroupplot[%
		tick label style={opacity=0}
	]
	\addmgresnorm{\levelcntr}{1|16}{0}
	\addmsgresnorm{\levelcntr}{1|16}{0}
	\coordinate (t3) at (axis description cs:.5,1.1);
	\pgfplotsset{
		after end axis/.append code={%
			\node [] at (t3) {$\anisotropy = 1/16$};
		}
	}
	\coordinate (s1) at (axis description cs:1.1,.5);
	\pgfplotsset{
		after end axis/.append code={%
			\node [rotate=-90] at (s1) {$\rotation = 0\degree$};
		}
	}

	\nextgroupplot[%
		x tick label style={opacity=0}
	]
	\addmgresnorm{\levelcntr}{1|4}{10}
	\addmsgresnorm{\levelcntr}{1|4}{10}

	\nextgroupplot[%
		tick label style={opacity=0}
	]
	\addmgresnorm{\levelcntr}{1|8}{10}
	\addmsgresnorm{\levelcntr}{1|8}{10}

	\nextgroupplot[%
		tick label style={opacity=0}
	]
	\addmgresnorm{\levelcntr}{1|16}{10}
	\addmsgresnorm{\levelcntr}{1|16}{10}
	\coordinate (s2) at (axis description cs:1.1,.5);
	\pgfplotsset{
		after end axis/.append code={%
			\node [rotate=-90] at (s2) {$\rotation = 10\degree$};
		}
	}

	\nextgroupplot[%
		x tick label style={opacity=0},
		ylabel={residual norm $\log_{10}(\|\bsr_i\|_h)$},
		ylabel style={at={(axis description cs:-.2,1.1)}}
	]
	\addmgresnorm{\levelcntr}{1|4}{20}
	\addmsgresnorm{\levelcntr}{1|4}{20}

	\nextgroupplot[%
		tick label style={opacity=0}
	]
	\addmgresnorm{\levelcntr}{1|8}{20}
	\addmsgresnorm{\levelcntr}{1|8}{20}

	\nextgroupplot[%
		tick label style={opacity=0}
	]
	\addmgresnorm{\levelcntr}{1|16}{20}
	\addmsgresnorm{\levelcntr}{1|16}{20}
	\coordinate (s3) at (axis description cs:1.1,.5);
	\pgfplotsset{
		after end axis/.append code={%
			\node [rotate=-90] at (s3) {$\rotation = 20\degree$};
		}
	}

	\nextgroupplot[%
	]
	\addmgresnorm{\levelcntr}{1|4}{30}
	\addmsgresnorm{\levelcntr}{1|4}{30}

	\nextgroupplot[%
		y tick label style={opacity=0},
		xlabel={number of iterations $i$},
		xlabel style={name=xlabel, at={(axis description cs:.5,-.2)}}
	]
	\addmgresnorm{\levelcntr}{1|8}{30}
	\addmsgresnorm{\levelcntr}{1|8}{30}

	\nextgroupplot[%
		y tick label style={opacity=0}
	]
	\addmgresnorm{\levelcntr}{1|16}{30}
	\addmsgresnorm{\levelcntr}{1|16}{30}
	\coordinate (s4) at (axis description cs:1.1,.5);
	\pgfplotsset{
		after end axis/.append code={%
			\node [rotate=-90] at (s4) {$\rotation = 30\degree$};
		}
	}
	\end{groupplot}
	
	% hack to add a bit of vspace
	\node[] () [below = .1em] at (xlabel) {};
	
\end{tikzpicture}%
	\tikzexternaldisable%

	\caption{\label{fig:residual_norms}$20$\%, $40$\%, \ldots, $80$\% quantiles of the norm of the residual after $50$ multigrid W$(2,2)$-cycles on a grid with $p = q = 6$ ($63 \times 63$ degrees of freedom) for Mat\'ern parameters $\corrlength =1/4$, $\smoothness =1/2$, $\anisotropy \in \{1/4, 1/8, 1/16\}$ (left to right) and $\rotation \in \{0\degree, 10\degree, 20\degree, 30\degree\}$ (top to bottom) based on $100$ samples. Median values are shown for MG (\ref{MG}) and MSG (\ref{MSG}).}
\end{figure}

We investigate the dependence on $\angledeg$ further in~\figref{fig:convergence_factors}, where we plot the averaged convergence factor for different cycling strategies using both MG and MSG. Notice how the classic MG method requires a large number of smoothing steps to reach converge on finer grids. With $\angledeg=$ 5, no convergence is reached for grids beyond 31 $\times$ 31 unknowns, even when five pre- and postsmoothing steps are used. The MSG method, on the other hand, retains an acceptable convergence factor in all cases considered.

Numerical results for various other parameter settings can be found online at \url{https://people.cs.kuleuven.be/~pieterjan.robbe/copper2019}. Finally, we refer to~\cite{kumar18} for a Local Fourier Analysis (LFA) of the model problem with isotropic Mat\'ern covariance. 

\section{Multi-Index Monte Carlo}\label{sec:MIMC}

Recall that our aim is to compute statistics of a quantity of interest $\qoi$  derived from the solution of problem~\eqref{eq:PDE}. Let us denote the quantity of interest computed on grid $\grid^{(\grididxone,\grididxtwo)}$ by $\qoi_{\idx}$, where $\idx=(\grididxone-\grididxone_0,\grididxtwo-\grididxtwo_0)$ is a multi-index with $\grididxone_0\leqslant\grididxone\leqslant\grididxonestop$ and $\grididxtwo_0\leqslant\grididxtwo\leqslant\grididxtwostop$. Index $\idx = (0,0)$ thus corresponds to an approximation on the coarse grid $\grid^{(\grididxone_0,\grididxtwo_0)}$, that is not necessarily the coarsest grid in the hierarchy defined by~\eqref{eq:hierarchy}. A Monte Carlo estimator for the expected value of the quantity of interest $\qoi$ is just the sample average, i.e.,
\begin{equation}
\E{\qoi} \approx \estimatorfor{\qoi}_\nbofsamples \coloneqq \frac{1}{\nbofsamples} \sum_{n=1}^\nbofsamples \qoi_{\maxindex}^{(n)},
\end{equation}
where $\qoi_{\maxindex}^{(n)}$ is the $n$th sample of $\qoi$ computed on the finest grid $\grid^{(\grididxonestop,\grididxtwostop)}$.

Instead of estimating the quantity of interest directly on the finest grid, the multi-index construction, for a generic number of dimensions $\ndims$, starts from a tensor product of single-direction differences defined as
\begin{equation}
\midiff \qoi_{\idx} \coloneqq \left(\bigotimes_{i=1}^{\ndims} \mldiff_i \right) \qoi_{\idx} \quad \text{with} \quad \mldiff_i \qoi_{\idx} = \begin{cases}
\qoi_{\idx} - \qoi_{\idx-\unit_i} &\text{ if } \level_i>0\\
\qoi_{\idx} &\text{ if } \level_i=0\\
\end{cases},
\end{equation}
where $\unit_i$ is the unit vector in direction $i$. For example, with $d=2$ and $\idx=(3,4)$, we have that
\begin{align}
\midiff \qoi_{(3,4)} &= \mldiff_2 (\mldiff_1 \qoi_{(3,4)}) \nonumber\\
&= \mldiff_2 ( \qoi_{(3,4)} - \qoi_{(2,4)}) \nonumber\\
&= \qoi_{(3,4)} - \qoi_{(2,4)} - \qoi_{(3,3)} + \qoi_{(2,3)}. \nonumber
\end{align}
In general, taking a sample of $\midiff \qoi_{\idx}$ requires the solution of a PDE on $2^\ndims$ different grids. However, when using the FMSG method for our two-dimensional model problem, we get  free solutions on \emph{all} coarser grids $\grid^{(\grididxone',\grididxtwo')}$ where $\grididxone'<\grididxone$ or $\grididxtwo'<\grididxtwo$. Hence, a sample of $\midiff \qoi_{\idx}$ can be computed at the same cost of computing a sample of $\qoi_{\idx}$.% In addition, we get free samples of the multi-index difference $\midiff \qoi_{\idxtwo}$ for all $\idxtwo<\idx$.

In order to obtain an efficient multi-index estimator, it is crucial that the difference $\midiff \qoi_{\idx}$ is computed from a quantity of interest that is based on a discretization of the PDE with the same underlying sample of the random field $\grf$. This will ensure that the quantities that constitute the multi-index difference are strongly positively correlated, and, hence, their difference will be small. As a consequence, the variance of the difference is heavily reduced. This means that, to reach the same \emph{mean square error}, fewer samples of the difference are required compared to sampling the quantity of interest directly. Furthermore, as $\bell\rightarrow+{}\infty$ in every coordinate, we expect the quantity of interest to converge towards the true quantity of interest, i.e., $\qoi_{\bell}\rightarrow\qoi$, and thus $\midiff \qoi_{\idx}\rightarrow0$, so that the variance of the difference decreases rapidly with $\idx$. This means that fewer and fewer (more expensive) samples are needed on finer grids.

The Multi-Index Monte Carlo estimator, proposed in~\cite{haji16}, is the sum of sample averages of multi-index differences, i.e.,
\begin{equation}\label{eq:mimcold}
\MIMC = \sum_{\idx\in\idxset(\maxlevel)} \frac{1}{\nbofsamples_{\idx}} \sum_{n=1}^{\nbofsamples_{\idx}} \midiff \qoi_{\idx}^{(n)},
\end{equation}
with $\idxset(\maxlevel)\subseteq\mathbb{N}_0^d$ a suitable set of indices, where $\maxlevel$ governs the size of the set, and $\nbofsamples_{\idx}$ is the number of samples on index $\idx$. In its current form, the multi-index estimator does not allow sample reuse. When computing the variance of~\eqref{eq:mimcold}, necessary to control the root mean square error, unwanted correlations appear, that are hard to estimate reliably.

Our \emph{unbiased} Multi-Index Monte Carlo estimator, inspired by the work in~\cite{rhee15, detommaso18}, does allow for sample reuse. The estimator can be written as
\begin{equation}\label{eq:mimc}
\uMIMC = \frac{1}{\nbofsamples}\sum_{n=1}^\nbofsamples \sum_{\bszero\leqslant\idx\leqslant\maxindex^{(n)}} \frac{1}{p_\idx} \midiff \qoi_{\idx}^{(n)},%, \text{ with } \sigma(\idx) = \Prob{\maxindex\geqslant\idx} = \int_{\idx}^{+\infty} \pi_{\bL}(\bx) \mathrm{d}\bx,
\end{equation}
where for every sample $n$, we draw a $\ndims$-variate discrete random variable $\bsL^{(n)}$ according to some non-zero probability mass function $\pmf_\bsL$, independent from the random samples of $\midiff \qoi_\idx$, and where the constants
\begin{equation}
p_\idx\coloneqq\prob{\bszero\leqslant\idx\leqslant\maxindex} = \prob{0\leqslant\level_1\leqslant\maxlevel_1 \cup 0\leqslant\level_2\leqslant\maxlevel_2 \cup \cdots \cup 0\leqslant\level_\ndims\leqslant\maxlevel_\ndims}
\end{equation}
%$p_\idx\coloneqq\prob{\bszero\leqslant\idx\leqslant\maxindex}$
are the probability that $\maxindex$ is at least $\idx$, component-wise. %, that is $p_\idx=\prob{0\leqslant\level_1\leqslant\maxlevel_1 \cup 0\leqslant\level_2\leqslant\maxlevel_2 \cup \cdots \cup 0\leqslant\level_\ndims\leqslant\maxlevel_\ndims}.$
%\begin{equation}
%p_\idx=\prob{\bszero\leqslant\idx\leqslant\maxindex} = \prob{0\leqslant\level_1\leqslant\maxlevel_1} \cdot \ldots \cdot \prob{0\leqslant\level_\ndims\leqslant\maxlevel_\ndims}.
%\end{equation}
The argument inside the multiple sum will be nonzero up to only a finite subset of $\N_0^\ndims$. If the probability mass $\pmf_\bsL$ is decreasing with $\bsL$, component-wise, this is mimicking the decreasing multivariate sequence of sample sizes $\{N_\idx\}$ in the classic MIMC estimator from equation~\eqref{eq:mimcold}.

We can proof unbiasedness of the MIMC estimator~\eqref{eq:mimc} by noting that
\begin{align}
\E{\uMIMC} &= \E{\frac{1}{\nbofsamples} \sum_{n=1}^\nbofsamples \sum_{\idx\in\N_0^\ndims} \frac{1}{p_\idx} \midiff \qoi_\idx^{(n)} \indicatorfunc_{[\boldsymbol{0}, \bsL^{(n)}]}(\idx)} \nonumber\\
&= \sum_{\idx\in\N_0^\ndims} \frac{1}{p_\idx} \E{ \frac{1}{\nbofsamples} \sum_{n=1}^\nbofsamples \midiff \qoi_\idx^{(n)} \indicatorfunc_{[\boldsymbol{0}, \bsL^{(n)}]}(\idx)} \nonumber\\
&= \sum_{\idx\in\N_0^\ndims} \frac{1}{p_\idx} \E{ \midiff \qoi_\idx } \E{\indicatorfunc_{[\boldsymbol{0}, \bsL^{(n)}]}(\idx)}\nonumber\\
&= \sum_{\idx\in\N_0^\ndims} \E{ \midiff \qoi_\idx } = \E{\qoi}.
\end{align}
Here, we used the multivariate indicator function, defined as the tensor product of one-dimensional indicator functions. That is, $\indicatorfunc_{[\boldsymbol{0}, \bsL^{(n)}]}(\idx) = \indicatorfunc_{[0, L_1^{(n)}]}(\level_1) \cdot \ldots \cdot \indicatorfunc_{[0, L_\ndims^{(n)}]}(\level_\ndims)$. In the last step, we used the telescoping property of the multi-index differences.

%
% figure: rate
%
\begin{figure}
	\centering
	\tikzexternalenable%
	\tikzsetnextfilename{convergence_factors.tex}%
	%!TEX root = ../ROBBE_PIETERJAN_paper.tex
%%%%%%
% TO FIX LEGEND CALL FROM .. THE FOLLOWING COMMAND:
% ```
% pdflatex -enable-write18 -halt-on-error -interaction=batchmode -jobname "tikz/convergence_factors" "\def\tikzexternalrealjob{ROBBE_PIETERJAN_paper}\input{ROBBE_PIETERJAN_paper}"
% ```
%%%%%%
\setlength{\figurewidth}{0.540\textwidth}%
\setlength{\figureheight}{0.325\textwidth}%
\tikzexternaldisable
\begin{tikzpicture}[outer sep=3pt]
	\begin{groupplot}[%
		group style={
			every plot/.append style = {convergence factor axis},
			group size=2 by 2,
			 horizontal sep=1em,
			 vertical sep=1em,
			 group name=my plots
		}
	]

	\nextgroupplot[%
		x tick label style={opacity=0},
		title={$\rotation=0\degree$},
		title style={below, at={(.5,.9)}},
	]
	\convergencefactorsfor{1_16}{0}

	\nextgroupplot[%
		tick label style={opacity=0},
		title={$\rotation=10\degree$},
		title style={below, at={(.5,.9)}},
	]
	\convergencefactorsfor{1_16}{10}

	\nextgroupplot[%
		title={$\rotation=20\degree$},
		title style={below, at={(.5,.9)}},
		ylabel={convergence factor},
		ylabel style={at={(axis description cs:-.1,1.1)}}
	]
	\convergencefactorsfor{1_16}{20}

	\nextgroupplot[%
		y tick label style={opacity=0},
		title={$\rotation=30\degree$},
		title style={below, at={(.5,.9)}},
		xlabel={$\Delta x = \Delta y$},
		xlabel style={name=xlabel, at={(axis description cs:-.04,-.15)}}
	]
	\convergencefactorsfor{1_16}{30}
	\end{groupplot}
	
	\path (my plots c1r1.north west|-current bounding box.north) --
	coordinate(legendpos)
	(my plots c2r1.north east|-current bounding box.north);
	\matrix[
		matrix of nodes,
		anchor=south,
		draw,
		inner sep=2pt,
		draw
	] at ([yshift=1ex]legendpos)
	{
		\ref{conv_fact_MG} &[1em] MG\hphantom{S} &[2em] & \ref{conv_fact_W(2,2)} &[1em] W(2,2)&[2em]\ref{conv_fact_W(4,4)} &[1em] W(4,4)\\
		\ref{conv_fact_MSG} &[1em] MSG&[2em] & \ref{conv_fact_W(3,3)} &[1em] W(3,3)&[2em]\ref{conv_fact_W(5,5)} &[1em] W(5,5)\\
	};
	
\end{tikzpicture}%%
	\tikzexternaldisable%

	\caption{\label{fig:convergence_factors}Expected convergence factors for MG (dashed line) and MSG (solid line) using different cycling strategies for Mat\'ern parameters $\corrlength =1/4$, $\smoothness =1/2$, $\anisotropy = 1/16$ and $\rotation \in \{0\degree, 10\degree, 20\degree, 30\degree\}$ (left to right and top to bottom) based on $100$ samples. Missing values indicate that some of the samples did not convergence.}
\end{figure}
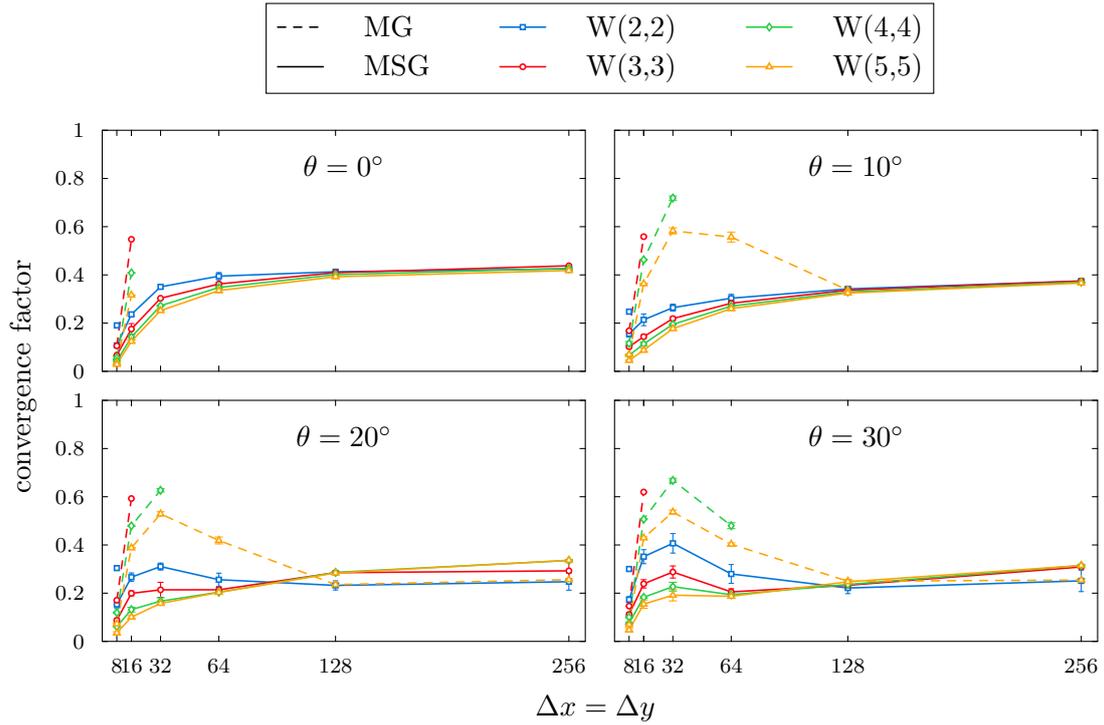

The randomisation of the index $\maxindex$ is the key difference with the classic multi-index estimator. The finite index set $\idxset(\maxlevel)$ in~\eqref{eq:mimcold} introduces an additional bias term in the expression for the root mean square error of the estimator, that needs to be controlled. Although adaptive approaches have been proposed to tackle this problem, see, e.g.,~\cite{robbe18}, obtaining a stable and accurate bound for the bias term is nontrivial. The formulation in~\eqref{eq:mimc} avoids this problem altogether, and the root mean square error of the estimator consists solely of a statistical error term, i.e.,
\begin{equation}\label{eq:rmse}
\text{RMSE}(\uMIMC) \coloneqq \sqrt{\E{\left(\uMIMC-\E{\qoi}\right)^2}}=  \sqrt{\V{\uMIMC}}.
\end{equation}
The variance of the estimator, $\V{\uMIMC}$, can be approximated using the sample variance of
\begin{equation}\label{eq:Y}
Y^{(n)} = \sum_{\bszero\leqslant\idx\leqslant\maxindex^{(n)}} \frac{1}{p_\idx} \midiff \qoi_{\idx}^{(n)},
\end{equation}
if every $n$th sample from $\maxindex$ and $\midiff \qoi_{\idx}$ is i.i.d. In particular, we have that
\begin{align}
\E{\uMIMC} &\approx E \coloneqq \frac{1}{N} \sum_{n=1}^N Y^{(n)}, \text{and} \label{eq:E}\\
\V{\uMIMC} &\approx V \coloneqq \frac{1}{N(N-1)} \sum_{n=1}^N \left( Y^{(n)} - E\right)^2.\label{eq:V}
\end{align}

At this point, it is important to stress that a single sample of $Y^{(n)}$ can be computed from only one deterministic PDE solve when using FMSG, since coarse solutions are returned for free by the multigrid solver. We will call the MIMC estimator that reuses samples Multiple Semi-coarsened Multigrid Multi-Index Monte Carlo (MSG-MIMC) and denote it by $\uMIMCr$. This is opposed to $\uMIMC$, the MIMC estimator that does not reuse samples. A basic algorithm for MSG-MIMC simulation takes $N$ draws of $\bsL$ according to a given distribution $\pmf_{\bsL}$, and performs one deterministic PDE solve using FMSG for every $\maxindex^{(n)}$, albeit on different grids. Note that this ensures the so-called \emph{downward closed} condition of the set of all indices that are added in the estimator, see~\cite{haji16}. This condition incorporates, amongst others, that the index set does not contain gaps, and ensures the validity of the telescoping sum identity used to prove unbiasedness. Algorithm~\ref{alg:MSGMIMC} provides the key instructions to implement our multi-index estimator.

%
% algorithm: UMIMC
%
\begin{algorithm}
\begin{algorithmic}[1]
\Statex \textbf{input:} a tolerance $\tol$ for the RMSE
\Statex \textbf{output:} an approximation $E$ for the mean of the quantity of interest $\qoi$, and an error estimate $\sqrt{V}$
\Statex
\Procedure{MSG-MIMC}{$\tol$}
%\State $\maxlevel\gets\maxlevel^\text{max}$
%\State $e\gets\infty$
\State $n\gets1$
\State $\pmf_\bsL\gets\text{GEOMETRIC}^\ndims(\log(2)(1+4)/2)$\label{chap6:alg:MSGMIMC:line4}
\Repeat
		%\State increase the number of samples $\nbofsamples$
		%\ForEach{$n = {N_\text{old}}+1 \dots \nbofsamples$}
		\State sample $\bsL^{(n)}$ according to the probability mass function $\pmf_\bsL$\label{chap6:alg:MSGMIMC:line8}
		\State solve the PDE using FMSG to obtain $\midiff \qoi_\bstau^{(n)}$, $\boldsymbol{0} \leq \bstau \leq \bsL^{(n)}$\label{chap6:alg:MSGMIMC:line9}
		\State compute $Y^{(n)}$ using~\eqref{eq:Y}
		%\EndFor
		\State compute ${E}$ using~\eqref{eq:E} and $V$ using~\eqref{eq:V}
		%\State $e\gets\sqrt{V}$
		\State update the probability mass function $\pmf_\bsL$
		\State $n\gets n+1$
	\Until{$\sqrt{V}\leq\tol$}
\EndProcedure
\end{algorithmic}
\caption{Multiple Semi-coarsened Multigrid Multi-Index Monte Carlo}
\label{alg:MSGMIMC}
\end{algorithm}

It remains to be determined how to make a favourable choice of $\pmf_{\maxindex}$. We rewrite our multi-index estimator as follows:
\begin{equation}\label{eq:relation_N_p}
\uMIMCr  = \sum_{n=1}^\nbofsamples \sum_{\bszero\leqslant\idx\leqslant\maxindex^{(n)}} \frac{1}{\nbofsamples p_\idx} \midiff \qoi_{\idx}^{(n)} = \sum_{n=1\vphantom{\idx}}^\nbofsamples \sum_{\bszero\leqslant\idx\leqslant\maxindex^{(n)}} \frac{1}{\nbofsamples'_{\idx}} \midiff \qoi_{\idx}^{(n)}
\end{equation}
where $\nbofsamples'_{\idx}$ denotes a discrete density of sample sizes, similar to the decreasing sequence of sample sizes $\nbofsamples_{\idx}$ in standard MIMC, see equation~\eqref{eq:mimcold}. In the latter, it is a well-known result that the optimal number of samples on each index is
\begin{equation}\label{eq:Nell}
\nbofsamples_{\idx} \simeq \sqrt{\frac{V_{\idx}}{C_{\idx}}},
\end{equation}
where $V_{\idx} \coloneqq \V{\midiff \qoi_{\idx}}$ and $C_{\idx}$ is the cost to compute a single sample of $\midiff \qoi_{\idx}$, see, e.g.,~\cite{giles08, haji16} Hence, knowing the discrete distributions of $V_{\idx}$ and $C_{\idx}$ across all $\idx$ allows one to learn the unknown probability $p_\idx$ by simple normalisation of the $N_\idx$. In a practical implementation, one can resort to either an on-the-fly computed empirical probability mass function using the (square-root of the) ratio of $V_{\idx}$ over $C_{\idx}$, or to a proposed model for their respective distributions. In our numerical experiments later on, we will opt for the first approach, while, for the remainder of this section, we choose the latter.% approach.

In particular, and as is customary in a multi-index setting, assume the expected value, variance and cost of $\midiff \qoi_{\idx}$ follow a product structure:
\begin{alignat}{2}
&E_{\idx} \coloneqq \left|\E{\midiff \qoi_{\idx}}\right| &&\leqslant c_E \prod_{j=1}^\ndims 2^{-\mlratealpha_j\level_j},\tag{A1}\label{eq:A1}\\
&V_{\idx} = \V{\midiff \qoi_{\idx}} &&\leqslant c_V \prod_{j=1}^\ndims 2^{-\mlratebeta_j\level_j},\quad\text{and}\tag{A2}\label{eq:A2}\\
&C_{\idx} = \text{cost}(\midiff \qoi_{\idx}) &&\leqslant c_C \prod_{j=1}^\ndims 2^{\mlrategamma_j\level_j},\tag{A3}\label{eq:A3}
\end{alignat}
where $c_E$, $c_V$, $c_C$ and $\mlratealpha_j$, $\mlratebeta_j$, $\mlrategamma_j$, $j=1,\ldots,\ndims$ are positive constants independent of $\bell$, see~\cite{haji16}. Using these assumptions, the optimal distribution for the number of samples $\nbofsamples_{\idx}$ in~\eqref{eq:Nell} can be written as
\begin{equation}
\nbofsamples_{\idx} \simeq \prod_{j=1}^\ndims 2^{-\frac{1}{2}(\mlrategamma_j+\mlratebeta_j)\level_j} \simeq \prod_{j=1}^\ndims \exp(-r_j \level_j),
\end{equation}
where $r_j = \frac{1}{2}\log(2)(\mlrategamma_j+\mlratebeta_j)$, for $j=1,\ldots,\ndims$. Hence, $\pmf_{\bL}$ must be chosen as a multivariate geometric distribution, i.e., the discrete equivalent of an exponential distribution, and
\begin{equation}
p_\idx = \prod_{j=1}^{\ndims} \exp(-r_j\ell_j).
\end{equation}
This explains the choice for the initial distribution on line~\ref{chap6:alg:MSGMIMC:line4} in Algorithm~\ref{alg:MSGMIMC}, where we picked $\mlratebeta_j=4$ and $\mlrategamma_j=1$, $j=1,2,\ldots,\ndims$.

We are now in a position to formulate a theorem that provides the expected cost reduction of our new MIMC estimator with sample reuse, compared to MIMC where no samples are reused, based on assumptions~\eqref{eq:A1}--\eqref{eq:A3}.

\begin{theorem}\label{thm:cost_reduction}
Let assumptions \eqref{eq:A1}--\eqref{eq:A3} hold. The cost reduction factor of the MSG-MIMC estimator is given by
\begin{equation}\label{eq:cost_reduction}
\frac{\uMIMCr}{\uMIMC} = 1 - \sum_{\substack{\setu\subseteq\{1,\ldots,\ndims\}\\\setu\ne\varnothing}} (-1)^{|\setu|+1} \prod_{j\in\setu} 2^{-\frac{1}{2}(\mlrategamma_j+\mlratebeta_j)}
\end{equation}
where $\mlratebeta_j$ is the rate of the decrease in variance in direction $j$, see~\eqref{eq:A2}, and $\mlrategamma_j$ is the rate of the increase in cost in direction $j$, see~\eqref{eq:A3}.
\end{theorem}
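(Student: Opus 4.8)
The plan is to read the displayed identity as a ratio of \emph{expected total costs} and to push everything through the product structure of the sampling law $\pmf_\bsL$ and of $C_\idx$, so that the computation factorises over the $\ndims$ coordinate directions and collapses to a single product.

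First I would argue that the two estimators require the \emph{same} number $\nbofsamples$ of outer samples to meet a prescribed RMSE, so that $\nbofsamples$ cancels in the ratio. Both build the identical per-sample quantity $Y^{(n)}=\sum_{\bszero\le\idx\le\maxindex^{(n)}}p_\idx^{-1}\midiff\qoi_\idx^{(n)}$ from the same draw $\maxindex^{(n)}\sim\pmf_\bsL$ and, crucially, from the \emph{same} realisation of the random field; hence $\{Y^{(n)}\}$ is identically distributed in both cases and $\V{\uMIMCr}=\V{\uMIMC}$ at fixed $\nbofsamples$. By \eqref{eq:rmse} the target $\nbofsamples$ therefore coincides, and it suffices to compare the expected cost of one outer sample. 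For that sample the reuse estimator performs a single FMSG solve to the top index $\maxindex^{(n)}$, costing $\simeq c_C\prod_j 2^{\mlrategamma_j L_j^{(n)}}$ by \eqref{eq:A3}, whereas the non-reuse estimator pays $\sum_{\bszero\le\idx\le\maxindex^{(n)}}C_\idx$, one independent difference per index of the downward-closed cube.

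Next I would take expectations over $\pmf_\bsL$, the product geometric law derived above with per-direction decay $\rho_j\coloneqq e^{-r_j}=2^{-\frac12(\mlrategamma_j+\mlratebeta_j)}$, and write $a_j\coloneqq 2^{\mlrategamma_j}$. Both expected costs factorise across $j$. The reuse factor is the elementary geometric sum $\sum_{L\ge0}(1-\rho_j)\rho_j^{L}a_j^{L}=(1-\rho_j)/(1-\rho_j a_j)$, convergent exactly when $\rho_j a_j=2^{\frac12(\mlrategamma_j-\mlratebeta_j)}<1$, i.e.\ $\mlratebeta_j>\mlrategamma_j$. For the non-reuse factor I would interchange the two summations (all terms positive, so the rearrangement is licit):
\[
\sum_{L\ge0}(1-\rho_j)\rho_j^{L}\sum_{\ell=0}^{L}a_j^{\ell}
=\sum_{\ell\ge0}a_j^{\ell}\Bigl(\sum_{L\ge\ell}(1-\rho_j)\rho_j^{L}\Bigr)
=\sum_{\ell\ge0}(a_j\rho_j)^{\ell}=\frac{1}{1-\rho_j a_j},
\]
the factor $(1-\rho_j)$ cancelling against the tail $\sum_{L\ge\ell}\rho_j^{L}=\rho_j^{\ell}/(1-\rho_j)$.

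Finally, forming the quotient, the common factors $c_C$ and $(1-\rho_j a_j)^{-1}$ cancel direction-by-direction, leaving $\uMIMCr/\uMIMC=\prod_{j=1}^{\ndims}(1-\rho_j)$. Expanding this product by inclusion--exclusion over subsets $\setu\subseteq\{1,\dots,\ndims\}$ gives $\sum_{\setu}(-1)^{|\setu|}\prod_{j\in\setu}\rho_j$; isolating the empty-set term and substituting $\rho_j=2^{-\frac12(\mlrategamma_j+\mlratebeta_j)}$ yields precisely \eqref{eq:cost_reduction}. The main obstacle is not the algebra but pinning down the cost model and the same-$\nbofsamples$ reduction: one must justify that FMSG delivers every coarse difference at the cost of the single finest solve while the unreused variant pays the full cube sum, and verify the admissibility condition $\mlratebeta_j>\mlrategamma_j$ that keeps both geometric series (and the estimator itself) finite. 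Note that \eqref{eq:A1} plays no role, since the ratio depends only on $\pmf_\bsL$, hence only on $\mlratebeta_j$ and $\mlrategamma_j$.
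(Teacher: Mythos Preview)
Your argument is correct and reaches the same formula, but by a genuinely different route from the paper. The paper argues in the classical sample-allocation picture: it writes the reduced per-index count $\nbofsamplessub_\idx$ as $\nbofsamples_\idx$ minus an inclusion--exclusion over the neighbouring indices $\idx+\unit_\setu$, uses the product form $\nbofsamples_{\idx+\unit_\setu}=\bigl(\prod_{j\in\setu}2^{-(\mlrategamma_j+\mlratebeta_j)/2}\bigr)\nbofsamples_\idx$ to factor out the constant $\prod_j(1-\rho_j)$ (already written in its inclusion--exclusion expansion), and then sums against $C_\idx$; the sum $\sum_\idx \nbofsamples_\idx C_\idx$ never has to be evaluated because it cancels in the ratio. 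You instead stay inside the unbiased-estimator framework, compute the expected per-outer-sample cost under the product geometric law $\pmf_\bsL$ for both variants, factorise over directions into one-dimensional geometric series, and only expand the resulting product $\prod_j(1-\rho_j)$ at the very end. Your approach is arguably closer in spirit to the paper's own formulation of the estimator (it works directly with the random $\maxindex$), and it makes the admissibility condition $\mlratebeta_j>\mlrategamma_j$ explicit; the paper's approach is shorter because the common sum never needs to be computed. The observation that \eqref{eq:A1} is unused is correct for both proofs.
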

\begin{proof}
Suppose the algorithm requires $\nbofsamples_\idx$ samples to be taken on index $\idx$. In the MSG-MIMC method, when recycling samples, the amount of samples that remains to be taken on index $\idx$ is given by
\begin{equation}
\nbofsamplessub_\idx \coloneqq \nbofsamples_\idx - \sum_{\substack{\setu\subseteq\{1,\ldots,\ndims\}\\\setu\ne\varnothing}} (-1)^{|\setu|+1} \nbofsamples_{\idx+\unit_{\setu}}.
\end{equation}
From~\eqref{eq:Nell}, we find that
\begin{equation}
\nbofsamples_{\idx+\unit_{\setu}} = \left(\prod_{j\in\setu} 2^{-(\gamma_j+\beta_j)/2} \right) \nbofsamples_{\idx}
\end{equation}
and hence
\begin{equation}
\nbofsamplessub_\idx = \left(1 - \sum_{\substack{\setu\subseteq\{1,\ldots,\ndims\}\\\setu\ne\varnothing}} (-1)^{|\setu|+1} \prod_{j\in\setu} 2^{-(\gamma_j+\beta_j)/2}\right) \nbofsamples_\idx.
\end{equation}
Therefore, 
\begin{align}
\cost{\uMIMCr} &= \sum_{\idx\in\N_0^\ndims} \nbofsamplessub_\idx C_\idx \\
&= \left(1 - \sum_{\substack{\setu\subseteq\{1,\ldots,\ndims\}\\\setu\ne\varnothing}} (-1)^{|\setu|+1} \prod_{j\in\setu} 2^{-(\gamma_j+\beta_j)/2}\right) \sum_{\level=0}^\infty \nbofsamples_\idx C_\idx \\
&= \left(1 - \sum_{\substack{\setu\subseteq\{1,\ldots,\ndims\}\\\setu\ne\varnothing}} (-1)^{|\setu|+1} \prod_{j\in\setu} 2^{-(\gamma_j+\beta_j)/2}\right) \cost{\uMIMC}.
\end{align}
This proves the theorem.
\end{proof}

The sample reuse in the MSG-MIMC method yields only a constant factor in cost reduction, compared to unbiased MIMC without sample reuse. In applications, usually, the rate of increase in cost, $\mlrategamma_j$, $j=1,\ldots,\ndims$, is fixed, and the benefit of the sample reuse will be more pronounced when the rate of decrease in variance, $\mlratebeta_j$, $j=1,\ldots,\ndims$, is small, reflected in a slow decay of the number of samples as $\idx$ increases. This is in agreement with previous results obtained for the ML(Q)MC setting, see~\cite{robbe19}.

%
% figure: reuse
%
\begin{figure}
	\centering
	\tikzexternalenable%
	\tikzsetnextfilename{time.tex}%
	%!TEX root = ../ROBBE_PIETERJAN_paper.tex
\setlength{\figurewidth}{.455\textwidth}
\setlength{\figureheight}{.4\textwidth}
\begin{tikzpicture}[outer sep=4pt, baseline]
	\begin{groupplot}[%
		group style={
			group size=2 by 3,
			 horizontal sep=6em,
			 vertical sep=4em
		}
	]
	\nextgroupplot[
		run time axis,
		xmode={log},
		ymode={log},
		xmin=9.999e-5,
		xmax=1e-2,
		ymin=1e-2,
		ymax=1e4,
		ytick={1e-2,1e-1,1e0,1e1,1e2,1e3,1e4},
		yticklabels={$10^{-2}$,,$10^{0}$,,$10^{2}$,,$10^{4}$},
		clip marker paths,
		ticklabel style={font=\scriptsize}
	]
	\addplot[semithick, solid, mark1, color=red26, mark options={solid}, line cap=round] table[x index=0, y index=1] {data/time_method_1_qoi_1.txt};
	\label{plot:red}
	\addplot[fancy dense dots, semithick,, mark2, color=blue26, mark options={solid}, line cap=round] table[x index=0, y index=2] {data/time_method_1_qoi_1.txt};
	\label{plot:blue}
	\slopetriangle{.3}{.25}{.05}{2}{black}{2}
	\nextgroupplot[
		run time axis,
		ylabel={normalised time},
		xmode={log},
		xmin=9.999e-5,
		xmax=1e-2,
		%xmin=1e-5,
		%xmax=1e-3,
		ymin=.8,
		ymax=3,
		axis on top,
		clip marker paths,
		ticklabel style={font=\scriptsize},
		y tick label style={
        			/pgf/number format/.cd,
           		fixed,
            		fixed zerofill,
            		precision=1,
       	 		/tikz/.cd
    		},
	]
	\addplot[semithick, solid, mark1, color=red26, mark options={solid}, line cap=round] table[x index=0, y index=1] {data/normalized_time_method_1_qoi_1.txt};
	\addplot[semithick, fancy dense dots, color=blue26, mark2, line cap={round}, mark options={solid}, error bars/.cd, y dir = both, y explicit, error mark options={draw=none}, error bar style={solid}] table[x index=0, y index=2, y error index=4] {data/normalized_time_method_1_qoi_1.txt};
	%\slopetriangle{.42}{.15}{.05}{1}{black}{1}
	\coordinate (s1) at (axis description cs:1.1,.5);
	\pgfplotsset{
		after end axis/.append code={%
			\node [rotate=-90] at (s1) {$Q_1$};
		}
	}
	%\addplot[semithick, fancy dense dots, color=green26, mark2, line cap={round}, mark options={solid}, error bars/.cd, y dir = both, y explicit, error mark options={draw=none}, error bar style={solid}] table[x index=0, y index=3, y error index=5] {chapters/specialized/art/normalized_time_method_1_qoi_1.txt};
	%
	%
	%
	\nextgroupplot[
		run time axis,
		xmode={log},
		ymode={log},
		xmin=5e-3,
		xmax=1e-1,
		ymin=1e1,
		ymax=1e4,
		ytick={1e1,1e2,1e3,1e4},
		clip marker paths,
		ticklabel style={font=\scriptsize}
	]
	\addplot[semithick, solid, mark1, color=red26, mark options={solid}, line cap=round] table[x index=0, y index=1] {data/time_method_1_qoi_2.txt};
	\coordinate (A0) at ([yshift=-22pt]xticklabel cs:.25);
	\addplot[fancy dense dots, semithick,, mark2, color=blue26, mark options={solid}, line cap=round] table[x index=0, y index=2] {data/time_method_1_qoi_2.txt};
	\slopetriangle{.3}{.25}{.05}{2}{black}{2}
	\nextgroupplot[
		run time axis,
		ylabel={normalised time},
		xmode={log},
		xmin=5e-3,
		xmax=1e-1,
		ymin=.8,
		ymax=3,
		axis on top,
		clip marker paths,
		ticklabel style={font=\scriptsize},
		 y tick label style={
        			/pgf/number format/.cd,
           		fixed,
            		fixed zerofill,
            		precision=1,
       	 		/tikz/.cd
    		},
	]
	\addplot[semithick, solid, mark1, color=red26, mark options={solid}, line cap=round] table[x index=0, y index=1] {data/normalized_time_method_1_qoi_2.txt};
	\addplot[semithick, fancy dense dots, color=blue26, mark2, line cap={round}, mark options={solid}, error bars/.cd, y dir = both, y explicit, error mark options={draw=none}, error bar style={solid}] table[x index=0, y index=2, y error index=4] {data/normalized_time_method_1_qoi_2.txt};
	\coordinate (s2) at (axis description cs:1.1,.5);
	\pgfplotsset{
		after end axis/.append code={%
			\node [rotate=-90] at (s2) {$Q_2$};
		}
	}
	\nextgroupplot[
		run time axis,
		xmode={log},
		ymode={log},
		xmin=1e-2,
		xmax=1e-1,
		ymin=1e1,
		ymax=1e4,
		ytick={1e1,1e2,1e3,1e4},
		clip marker paths,
		ticklabel style={font=\scriptsize}
	]
	\addplot[semithick, solid, mark1, color=red26, mark options={solid}, line cap=round] table[x index=0, y index=1] {data/time_method_1_qoi_4.txt};
	\coordinate (A0) at ([yshift=-22pt]xticklabel cs:.25);
	\addplot[fancy dense dots, semithick,, mark2, color=blue26, mark options={solid}, line cap=round] table[x index=0, y index=2] {data/time_method_1_qoi_4.txt};
	\slopetriangle{.3}{.25}{.05}{2}{black}{2}
	\nextgroupplot[
		run time axis,
		ylabel={normalised time},
		xmode={log},
		xmin=1e-2,
		xmax=1e-1,
		ymin=.8,
		ymax=3,
		axis on top,
		clip marker paths,
		ticklabel style={font=\scriptsize},
		 y tick label style={
        			/pgf/number format/.cd,
           		fixed,
            		fixed zerofill,
            		precision=1,
       	 		/tikz/.cd
    		},
	]
	\addplot[semithick, solid, mark1, color=red26, mark options={solid}, line cap=round] table[x index=0, y index=1] {data/normalized_time_method_1_qoi_4.txt};
	\addplot[semithick, fancy dense dots, color=blue26, mark2, line cap={round}, mark options={solid}, error bars/.cd, y dir = both, y explicit, error mark options={draw=none}, error bar style={solid}] table[x index=0, y index=2, y error index=4] {data/normalized_time_method_1_qoi_4.txt};
	\coordinate (s3) at (axis description cs:1.1,.5);
	\pgfplotsset{
		after end axis/.append code={%
			\node [rotate=-90] at (s3) {$Q_3$};
		}
	}
	
	\end{groupplot}
	
	% node to avoid trim by externalization
	\node[] at (A0) {\textcolor{white}{x}};
\end{tikzpicture}%
	\tikzexternaldisable%

	\caption{\label{fig:time}Performance of the MSG-MIMC method with (\ref{plot:red}) and without (\ref{plot:blue}) recycling for different quantities of interest: point evaluation of the solution ($Q_1$, {top}), average value of the solution over a subdomain ($Q_2$, {middle}), flux through a boundary ($Q_3$, {bottom}).}
\end{figure}
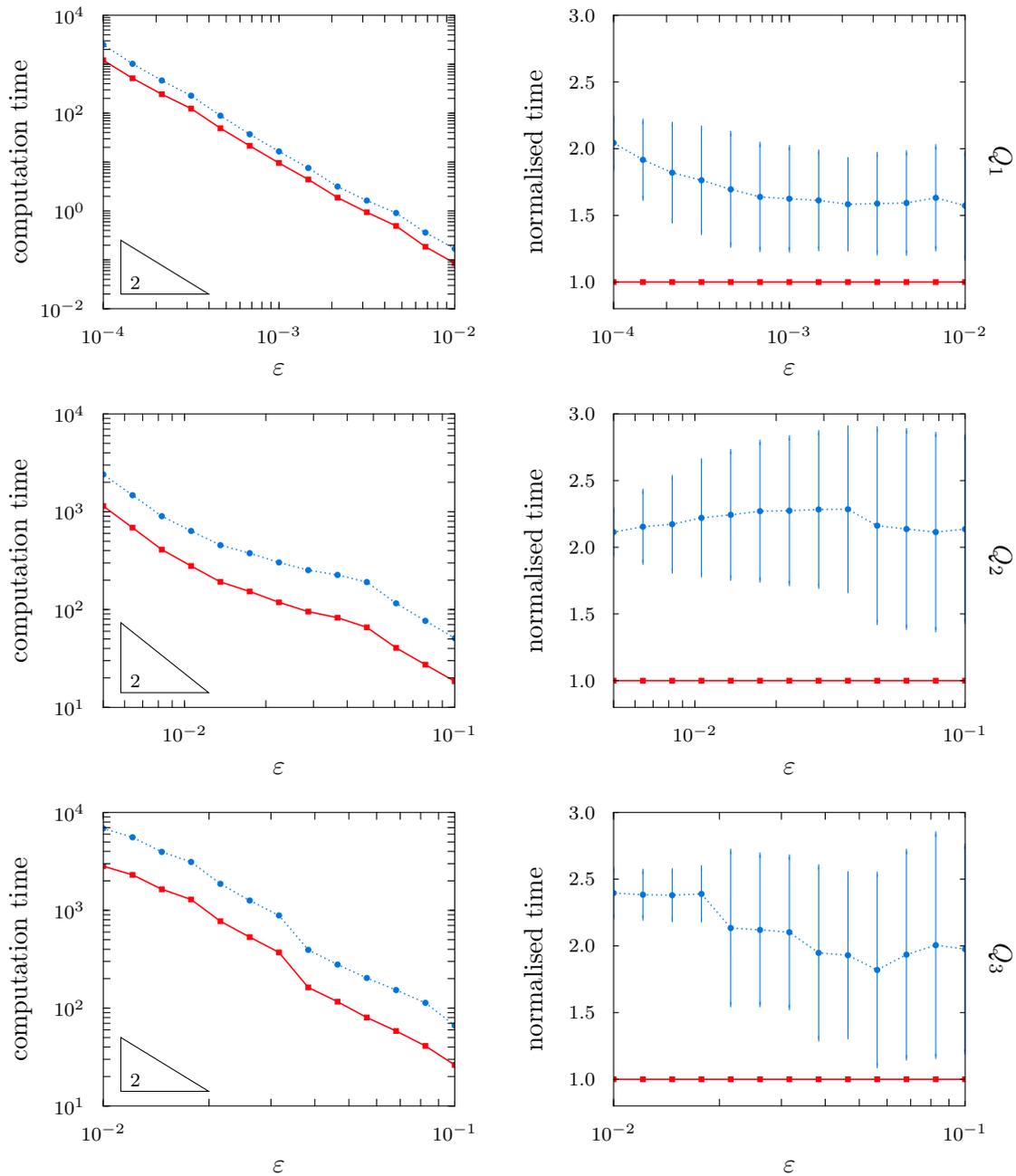

\section{Numerical results}\label{sec:numerics}

In this section, we present some numerical results for the model problem in equation~\eqref{eq:PDE}, with $D=[0,1]^2$ and $h(\bsx)=1$. We consider three quantities of interest:
\begin{enumerate}
\item A point evaluation of the solution at $\bsx=(1/2,1/2)$:
\begin{equation}
Q_1 \coloneqq u((1/2,1/2), \cdot).
\end{equation}
\item The average value of the solution over the subdomain $D_\text{sub}=[1/4,1/2]^2$:
\begin{equation}
Q_2 \coloneqq \frac{1}{\left|D_\text{sub}\right|}\int_{D_\text{sub}} u(\bsx, \cdot)\;\mathrm{d}\bsx,
\end{equation}
where the integral is approximated using a two-dimensional trapezoidal rule.
\item The flux through the rightmost side of the domain:
\begin{equation}
Q_3 \coloneqq -\int_0^1 a((1,y), \cdot) \frac{\partial u}{\partial x} ((1,y), \cdot) \mathrm{d}y,
\end{equation}
where the integral is approximated using a trapezoidal rule, and the derivative is approximated using first-order finite differences.
\end{enumerate}
In all test cases, the uncertain diffusion coefficient $a(\bsx,\omega)$ is modelled as a lognormal Gaussian random field with Mat\'ern covariance~\eqref{eq:matern} with smoothness $\smoothness=1/2$, length scale $\lengthscale=1/4$, and where the anisotropic ratio and rotation are considered as hyperparameters, uniformly distributed in $[1/16, 1/4]$ and $[-30\degree, 30\degree]$, respectively. Exact samples of the random field are computed using circulant embedding, where the field is sampled on a cube that is 
\begin{equation}
1 + 2\left\lceil\sqrt\smoothness\log_2(\max(2^{p_0}, 2^{q_0}))\right\rceil
\end{equation}
times larger in every direction, compared to the size of the physical domain, to ensure positive definiteness of the covariance matrix, see~\cite{graham17}. We should mention that, in the case of hyperparameters, the eigenvalues of the covariance matrix must be recomputed in every sample. Fortunately, they can be computed rapidly ($\order{2^{p+q}\log(2^{p+q})}$ time) using an FFT routine. See also~\cite{latz19} for an alternative method to efficiently generate samples from a Gaussian random field with hyperparameters, based on reduced basis surrogate modelling.

A coarsest mesh with $p_0=q_0=4$ was used for $Q_1$, and $p_0=q_0=16$ for $Q_2$ and $Q_3$, and the deterministic PDE is solved using FMSG with an adaptive number of W(2,2)-cycles $\nu_0$, see Section~\ref{sec:MSG}. The finest mesh on which we were able to solve the PDE consists of $2^{18}=262\,144$ degrees of freedom ($\bar{p}+\bar{q}=18$). The expected rates $\alpha_j$, $\beta_j$ and $\gamma_j$, $j=1,2$ in assumptions \eqref{eq:A1}--\eqref{eq:A3} are summarised in Table~\ref{tab:rates}, for each quantity of interest. The rate of increase in computational cost, $\gamma_j$, $j=1,2$, remains approximately constant for all quantities of interest considered. The rate of decrease in variance, $\beta_j$, $j=1,2$, diminishes, going from $Q_1$ to $Q_2$ to $Q_3$. Hence, according to Theorem~\ref{thm:cost_reduction}, we expect the largest benefit of sample reuse for $Q_3$, followed by $Q_2$, and the smallest benefit for the first quantity of interest, $Q_1$.

%
% figure: index set 1
%
\begin{figure}[p]
	\centering
	\vspace{-4em}
	\tikzexternalenable%
	\tikzsetnextfilename{index_set_1.tex}%
	\input{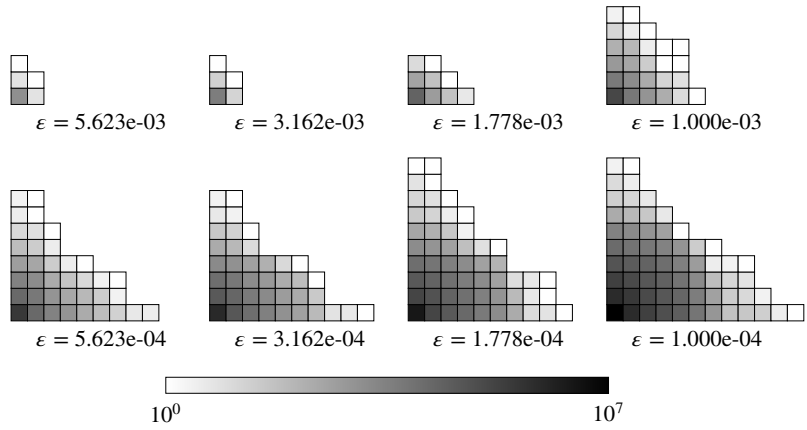}%
	\tikzexternaldisable%

	\vspace{-1em}
	\caption{\label{fig:index_set_1}Shape of the index set for the first quantity of interest $Q_1$ (point evaluation).}
\end{figure}
%
%
%

%
% figure: index set 2
%
\begin{figure}[p]
	\centering
	\vspace{-2em}
	\tikzexternalenable%
	\tikzsetnextfilename{index_set_2.tex}%
	\input{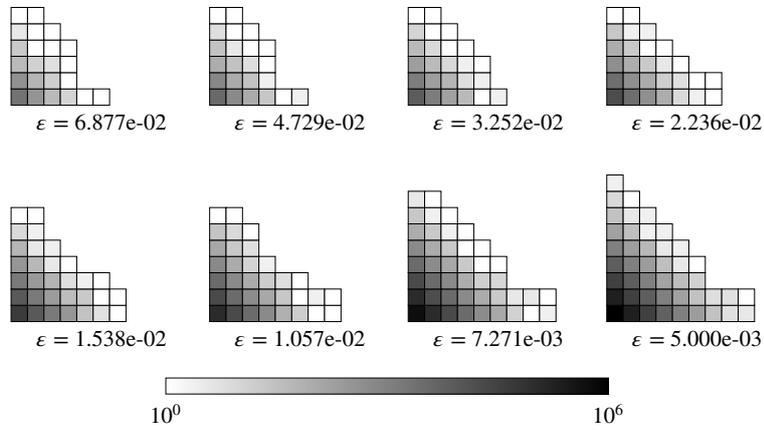}%
	\tikzexternaldisable%

	\vspace{-1em}
	\caption{\label{fig:index_set_2}Shape of the index set for the second quantity of interest $Q_2$ (average value).}
\end{figure}
%
%
%

%
% figure: index set 4
%
\begin{figure}[p]
	\centering
	\tikzexternalenable%
	\tikzsetnextfilename{index_set_4.tex}%
	\input{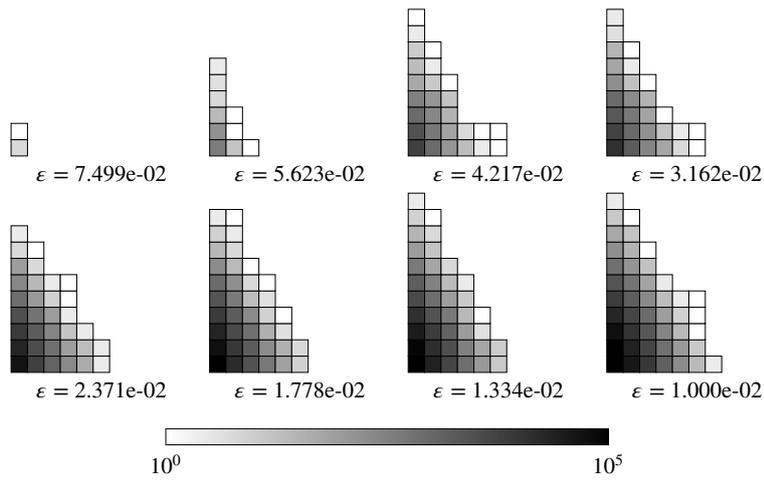}%
	\tikzexternaldisable%

	\vspace{-1em}
	\caption{\label{fig:index_set_4}Shape of the index set for the third quantity of interest $Q_3$ (flux).}
\end{figure}

\begin{table}
\centering
\begin{tabular}{cccc}\toprule
& $(\alpha_1, \alpha_2)$ & $(\beta_1, \beta_2)$ & $(\gamma_1, \gamma_2)$ \\ \midrule
$Q_1$ & $(2.02,1.97)$ & $(4.73,4.47)$ & $(1.27,1.21)$ \\
$Q_2$ & $(0.43,0.30)$ & $(2.21,2.57)$ & $(1.21,1.34)$ \\
$Q_3$ & $(0.33,0.40)$ & $(1.36,1.51)$ & $(1.49,1.26)$ \\ \bottomrule
\end{tabular}
\caption{Fitted values for the rates $\alpha_j$, $\beta_j$ and $\gamma_j$, $j=1,2$ in assumptions \eqref{eq:A1}--\eqref{eq:A3} for each quantity of interest $Q_1$, $Q_2$ and $Q_3$.} 
\label{tab:rates}
\end{table}

We compare our implementation of MSG-MIMC based on Algorithm~\ref{alg:MSGMIMC} to unbiased MIMC, where no samples are reused. We did not actually perform a simulation for the latter, but just added to the total computational cost the additional cost inferred by computing the recycled samples using FMSG. All simulations are preformed on a parallel computer with 24 logical cores. The results are outlined in~\figref{fig:time}. On the left in the figure is the average run time for 5 different random number generator seeds, plotted against a decreasing sequence of tolerances on the RMSE. On the right is the corresponding normalised run time. By recycling samples, we can achieve a cost reduction factor of 2 or more. Furthermore, the results are in agreement with our theoretical analysis: the largest speedup is observed for the third quantity of interest $Q_3$, at the bottom of the figure, because the rate of decrease in variance, $\beta_j$, $j=1,2$ is smallest. The least improvement is observed for the first quantity of interest, $Q_1$, at the top of the figure, because the rate of decrease in variance, $\beta_j$, $j=1,2$ is highest.

The shape of the index set for selected tolerances $\varepsilon$ is shown in Figures~\ref{fig:index_set_1}--\ref{fig:index_set_4}. Superimposed on the figure, using logarithmic colour codes, is the number of samples taken on each of these indices. By equation~\eqref{eq:relation_N_p}, this corresponds to a view of the learnt probability mass function $\pmf_\bsL$. Notice the effect of the anisotropic field, combined with a non-symmetric  quantity of interest, $Q_3$, on the final shape of the index set in Figure~\ref{fig:index_set_4}. It is apparent that most refinement is needed in the $y$-direction.

Finally, we should mention that the MSG-MIMC estimator for $Q_3$ is no longer unbiased for $\tol<1.334$e-$2$: the algorithm requested a sample of the multi-index difference on a grid with $p=0$ and $q=19$, which is beyond the capabilities of our current hardware, due to memory constraints. We argue, however, that this bias would be unavoidable, even when using the classic MIMC estimator.

\section*{Acknowledgments}
This research was funded by project IWT/SBO EUFORIA: ``Efficient Uncertainty Quantification For Optimization in Robust design of Industrial Applications.'' (IWT-140068) of the Agency for Innovation by Science and Technology, Flanders, Belgium.

\bibliographystyle{acm}
\bibliography{references}%

\end{document}